\newtheorem{theorem}{Theorem}[section]
\newtheorem{lemma}[theorem]{Lemma}
\newtheorem{proposition}[theorem]{Proposition}
\theoremstyle{definition}
\newtheorem{definition}[theorem]{Definition}
\theoremstyle{remark}
\newtheorem{remark}[theorem]{Remark}
\numberwithin{equation}{section}
\newcommand{\eps}{\varepsilon}
\newcommand{\Z}{\mathbb{Z}}
\newcommand{\R}{\mathbb{R}}
\newcommand{\Q}{\mathbb{Q}}
\newcommand{\I}{\mathcal{I}}
\newcommand{\IP}{\mathbb{P}}
\newcommand{\IE}{\mathbb{E}}
\newcommand{\heta}{{\hat \eta}}
\newcommand{\hpsi}{{\hat \psi}}
\newcommand{\e}{\mathbf{e}}
\newcommand{\capa}{\mathop{\mathrm{cap}}}
\newcommand{\supp}{\mathop{\mathrm{supp}}}
\newcommand{\dist}{\mathop{\mathrm{dist}}}
\newcommand{\Ran}{\mathop{\mathrm{Ran}}}
\newcommand{\diam}{\mathop{\mathrm{diam}}}
\newcommand{\lf}{\lfloor}
\newcommand{\rf}{\rfloor}
\newcommand{\1}[1]{{\bf 1}{\{#1\}}}
\newcommand{\se}{\mathfrak{s\kern-0.1em.\kern-0.13em{}e\kern-0.1em.}}
\def\bbone{\boldsymbol 1}
\begin{document}%>>>

\title{On the internal distance in the interlacement set}%<<<1
\author{Ji\v{r}\'\i{} \v{C}ern\'y}
\address{Department of Mathematics,
  University of Vienna,
  Nordbergstra\ss e 15,
  1090 Vienna, Austria}
\email{jiri.cerny@univie.ac.at}

\author{Serguei Popov}
\address{Department of Statistics,
  Institute of Mathematics, Statistics and Scientific Computation,
  University of Campinas--UNICAMP,
  rua S\'ergio Buarque de Holanda 651, 13083--859, Campinas SP,
  Brazil}
\email{popov@ime.unicamp.br}

\subjclass[2010]{Primary 60K35, 82B43}
\keywords{shape theorem, simple random walk,
  intersections of random walks, capacity}

\begin{abstract}
  We prove a shape theorem for the internal (graph) distance on the
  interlacement set $\I^u$ of the random interlacement model on
  $\mathbb Z^d$, $d\ge 3$. We provide large deviation estimates for the
  internal distance of distant points in this set, and use these
  estimates to study the internal distance on the range of a simple
  random walk on a discrete torus.
\end{abstract}

\maketitle
%>>>
\section{Introduction and the results} %<<<1
\label{s_intro}
We study properties of the interlacement set~$\I^u$ of the random
interlacement model. We are mainly interested in its connectivity
properties, in particular in the internal distance (sometimes called the
  chemical distance) on the interlacement cluster.

The random interlacement model was introduced in~\cite{Szn10} in order to
describe the microscopic structure in the bulk which arises when studying
the disconnection time of a discrete cylinder or the vacant set of random
walk on a discrete torus. It can be informally described as a dependent
site percolation on $\mathbb Z^d$, $d\ge 3$, which is `generated' by a
Poisson cloud of independent simple random walks whose intensity is
driven by a non-negative multiplicative parameter $u$. The set covered by
these random walks is called the \emph{interlacement set at level $u$} and is
denoted by $\mathcal I^u$. As the precise definition of $\mathcal I^u$ is
rather lengthy, we postpone it to Section~\ref{s_preliminaries} and state
our results first.

Let $\mathbb P_0^u = \mathbb P[ \,\cdot\, | \, 0\in \mathcal I^u] $ be
the conditional distribution  given that
the origin is in the interlacement set $\I^u$.
For $x,y\in\I^u$ we define $\rho_u(x,y)$ to be the internal distance
between~$x$ and~$y$ within the interlacement set~$\I^u$:
\begin{equation*}
  \begin{split}
    \rho_u(x,y) = \min\{n&: \text{there exist }x_0,x_1,\ldots,x_n\in\I^u
      \text{ such that }x_0=x, x_n=y, \\
      &\text{ and }\|x_k-x_{k-1}\|_1=1
      \text{ for all }k=1,\ldots,n\},
  \end{split}
\end{equation*}
where $\|\cdot\|_1$ denotes the $\ell_1$-norm in~$\Z^d$. As we shall see
below, the set $\I^u$ is $\IP$-a.s.\ connected for all~$u$, so
$\rho_u(x,y)<\infty$ for all $u>0$ and $x,y\in\I^u$. Assuming that
$x\in\I^u$, let $\Lambda^u(x,n) = \{y\in\I^u: \rho^u(x,y)\leq n\}$ be the
ball centred at~$x$ with radius~$n$ in the  internal distance. We
abbreviate $\Lambda^u(n):=\Lambda^u(0,n)$.

The first main result of this paper is the shape theorem for large balls
in the internal distance.
\begin{theorem}
  \label{t_shape}
  For every $u>0$ and $d\ge 3$
  there exists a compact convex set $D_u\subset\R^d$ such that
  for any $\eps>0$ there exists a
  $\IP_0^u$-a.s.\ finite random variable~$N$ such that
  \[
    \big((1-\eps)nD_u\cap \I^u\big) \subset \Lambda^u(n)
    \subset  (1+\eps)nD_u
  \]
  for all $n\geq N$.
\end{theorem}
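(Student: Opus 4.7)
The plan is the classical Cox--Durrett subadditive-ergodic route, adapted to two features of the interlacement setting: $\rho_u$ is only defined on $\I^u\times\I^u$, and the interlacement field is merely mixing with polynomially decaying correlations rather than i.i.d. To deal with the first, I would work with the regularised distance $\tilde\rho_u(x,y):=\rho_u(\pi(x),\pi(y))$, where $\pi(x)$ denotes the closest point of $\I^u$ to $x$ under a deterministic tie-breaking rule. Since $\|x-\pi(x)\|$ has stretched-exponential tails (readily deduced from $\IP[B(x,r)\subset\Z^d\setminus\I^u]\le\exp(-c r^{d-2})$), $\tilde\rho_u$ coincides with $\rho_u$ on $\I^u\times\I^u$ up to an $o(|x-y|)$ correction, is defined everywhere on $\Z^d\times\Z^d$, is subadditive up to a bounded additive error, translation invariant in law, and satisfies $\tilde\rho_u(x,y)\ge \|x-y\|_1-o(|x-y|)$.

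Given the a priori integrability bound $\IE[\tilde\rho_u(0,x)]\le C|x|$, translation invariance and ergodicity of the interlacement measure under integer lattice shifts (standard, due to Sznitman) allow Kingman's subadditive ergodic theorem to produce, for each $x\in\Q^d$, a constant $\mu_u(x)\ge\|x\|_1$ such that $n^{-1}\tilde\rho_u(0,nx)\to\mu_u(x)$ $\IP$-a.s. Subadditivity and positive homogeneity are inherited from $\tilde\rho_u$, so $\mu_u$ extends continuously to a norm on $\R^d$ whose unit ball $D_u:=\{x:\mu_u(x)\le 1\}$ is compact and convex. The a priori bound itself is not routine: it requires constructing, with probability bounded away from zero, a nearest-neighbour path in $\I^u$ of length $O(|x|)$ joining $\pi(0)$ and $\pi(x)$. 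The natural tool is a coarse-graining argument showing that on a sufficiently coarse mesh the trace of $\I^u$ stochastically dominates highly supercritical Bernoulli site percolation on $\Z^d$, so that a macroscopic path in the dominated percolation yields the desired microscopic path in $\I^u$.

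The shape theorem is then equivalent to the uniform statement
\[
  \max_{x\in(1+\eps)nD_u\cap\Z^d}\bigl|n^{-1}\tilde\rho_u(0,x)-\mu_u(x/n)\bigr|\xrightarrow{n\to\infty}0\quad\IP_0^u\text{-a.s.},
\]
and upgrading pointwise to uniform convergence is the main obstacle. The standard recipe --- covering $\partial D_u$ by finitely many rational directions and invoking Borel--Cantelli --- requires a concentration estimate of the form $\IP\bigl[|\tilde\rho_u(0,x)-\IE\tilde\rho_u(0,x)|\ge t\bigr]\le\exp(-t^{\alpha}/|x|^{\beta})$ summable over the polynomially many points involved. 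The long-range polynomial correlations of $\I^u$ preclude direct Azuma/Talagrand/entropy arguments; the proof should instead exploit the Poisson structure of the underlying trajectory cloud, decoupling it across scales via Sznitman's decoupling inequalities and the soft local time construction of Popov--Teixeira. The large-deviation estimates thus produced (announced in the abstract) should feed into the Borel--Cantelli argument to yield uniform convergence. The inclusion $\Lambda^u(n)\subset(1+\eps)nD_u$ then follows by taking $x\in\Lambda^u(n)$ and reading off $\mu_u(x)\le(1+o(1))n$, while $(1-\eps)nD_u\cap\I^u\subset\Lambda^u(n)$ uses $\tilde\rho_u=\rho_u$ on $\I^u\times\I^u$.
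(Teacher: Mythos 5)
Your skeleton --- Kingman's subadditive ergodic theorem along rational directions, extension of the limit to a norm, then a finite covering of $\partial D_u$ plus Borel--Cantelli --- is the same as the paper's Section~\ref{s_proof_shape}, and your closest-point regularisation $\tilde\rho_u$ is a legitimate substitute for the paper's device of tracking the successive points $\psi^{(x)}_k$ of $\I^u$ along the discrete ray through $x$ (it even makes subadditivity of the limit cleaner, whereas the paper needs the separate Lemma~\ref{l_norm_rationals}). The problem is that both load-bearing inputs you invoke are the wrong ones. For the a priori linear bound and, more importantly, for the uniform Lipschitz-type estimate needed to interpolate between rational directions, you propose coarse-graining $\I^u$ so that it dominates supercritical Bernoulli percolation. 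The paper states explicitly that this domination argument only works for $d\ge 5$ (it needs a two-dimensional slab of codimension at least $3$ so that the walks are transient transversally and distinct blocks use essentially disjoint trajectories), and relegates it to the appendix; the entire difficulty of the paper is $d=3,4$, which is resolved instead by Proposition~\ref{p_connectedness} (a backbone of $\I^u$ in $B(m)$ can be built from $\Theta(m^{d-2-h})$ trajectories, connected after at most $2\beta(d,h)+1$ switches) together with the finite-range renormalisation of Section~\ref{s:ap} that yields Theorem~\ref{t_ap} with a stretched-exponential rate. As written, your plan has no valid route to the key estimate in low dimensions.

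Second, the uniformity step does not require a concentration inequality of the form $\IP[|\tilde\rho_u(0,x)-\IE\tilde\rho_u(0,x)|\ge t]\le\exp(-t^\alpha/|x|^\beta)$, and no such inequality is proved in the paper (nor would it follow "should"-style from decoupling inequalities or soft local times, neither of which appears anywhere in the argument). What is actually needed, and what the paper uses, is the one-sided bound of Theorem~\ref{t_ap}: with probability $1-\se(n)$ \emph{every} $x\in\I^u\cap[-n,n]^d$ satisfies $\rho_u(0,x)\le Cn$. Combined with the a.s.\ radial limits along a finite $\eps$-net of rational directions, translation invariance, and Borel--Cantelli, this bounds the internal distance from a generic point of $nD_u\cap\I^u$ to the nearest point of the net by $C\eps' n$, which is exactly the classical completion of the covering argument. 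Your proposal replaces an available and sufficient ingredient by an unproved (and at the time unknown) concentration estimate, so the uniform-convergence step would stall there.
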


\begin{remark}
  Clearly, the set $D_u$ is symmetric under rotations and
  reflections of~$\Z^d$ and
  $D_u\subset\{x\in\R^d:\|x\|_1\leq 1\}$ for all~$u$.
  It is straightforward to show that
  $D_u\to \{x\in\R^d:\|x\|_1\leq 1\}$ as $u\to\infty$;
  it would be interesting, however, to be able to say something about
  the behaviour of~$D_u$ when $u\to 0$ (e.g., does the shape become
    close to the Euclidean ball, and what can be said
    about the size of~$D_u$ as $u\to 0$?).
\end{remark}

The key technical step in the proof of Theorem~\ref{t_shape} is a fact
(which is of independent interest) that the distance within the
interlacement cluster should typically be of the same order as the usual
distance.
\begin{theorem}
  \label{t_ap}
  For every $u>0$ and $d\ge 3$ there exist
  constants $C,C' < \infty$ and
  $\delta \in (0,1)$ such that
  \begin{equation*}
    \IP^u_0[\text{there exists }x\in \I^u\cap[-n,n]^d
      \text{ such that } \rho_u(0,x)>C  n]
    \leq C' e^{-n^{\delta}}.
  \end{equation*}
\end{theorem}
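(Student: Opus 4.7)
The plan is a multiscale renormalization in the spirit of Sznitman's works on random interlacements. Fix a mesoscopic scale $L$, to be chosen as a suitable power of $\log n$, and partition $\Z^d$ into disjoint boxes $B_L(z)$, $z\in L\Z^d$, of side $L$; let $\tilde B_L(z)$ denote the concentric box of side $3L$. Call $B_L(z)$ \emph{good} if $\I^u\cap \tilde B_L(z)$ contains a \emph{backbone} $\mathcal S_z$, built from traces of a few Poisson trajectories crossing $\tilde B_L(z)$, such that (a)~the internal diameter of $\mathcal S_z$ in $\I^u$ is at most $C_1L$; and (b)~every $y\in\I^u\cap B_L(z)$ is connected to $\mathcal S_z$ inside $\I^u\cap \tilde B_L(z)$ by a path of length at most $C_1L$. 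If every box meeting $[-n,n]^d$ is good, one builds a path from $0$ to an arbitrary $x\in\I^u\cap[-n,n]^d$ by following the coarse-grained straight line of boxes $z_0,\dots,z_k$ from $0$'s box to $x$'s box: adjacent backbones $\mathcal S_{z_i},\mathcal S_{z_{i+1}}$ share interlacement points in the overlap of their enlarged boxes, so one hops from one to the next in $O(L)$ internal steps. Since $k=O(n/L)$, the total length of this path is $O(n)$, yielding the desired linear bound.

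The heart of the proof is therefore the single-box estimate $\IP[B_L(z)\text{ is bad}]\le e^{-L^{c_0}}$ for some $c_0>0$. The backbone $\mathcal S_z$ will be assembled from the trajectories of the underlying Poisson cloud that cross $\tilde B_L(z)$: the expected number of such trajectories is $u\cdot\capa(\tilde B_L(z))\asymp uL^{d-2}$, and they can be spliced via their mutual intersections (for $d=3,4$, using classical two-walk intersection estimates) or merged through other trajectories (for $d\ge 5$, using that an SRW hits a set of capacity $\asymp L^{d-2}$ inside the box with overwhelming probability), which yields~(a). Property~(b) is more delicate, since a naive ``follow the trajectory through $y$ until it meets the backbone'' argument only delivers $O(L^2)$ internal distance. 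To get $O(L)$ one needs an inductive refinement: at a smaller scale $\ell\ll L$ most sub-boxes of $\tilde B_L(z)$ already contain a local backbone of internal diameter $O(\ell)$, which acts as an efficient conduit to $\mathcal S_z$. Setting up this inductive within-box renormalization, and propagating the stretched-exponential error through it, is the main obstacle of the proof.

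The passage from the single-box bound to the global statement uses the decoupling inequalities for random interlacements (Sznitman's original ones, or the soft local time method of Popov--Teixeira): events localized in boxes at mutual Euclidean distance $\ge L$ are approximately independent up to a small error. A union bound then yields
\[
  \IP\bigl[\exists z\in L\Z^d,\ B_L(z)\cap[-n,n]^d\ne\emptyset,\ B_L(z)\text{ bad}\bigr]
  \le C(n/L)^d e^{-L^{c_0}}+(\text{decoupling error}).
\]
Choosing $L$ as an appropriate power of $\log n$ makes the right-hand side at most $C'e^{-n^\delta}$ for some $\delta\in(0,1)$, and the deterministic path construction above then converts this into $\rho_u(0,x)\le Cn$ uniformly over $x\in\I^u\cap[-n,n]^d$, with the claimed stretched exponential tail. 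The constant $C$ depends only on $C_1$ and $d$; the constant $\delta$ is a function of $c_0$ and of the rate of the decoupling inequality used.
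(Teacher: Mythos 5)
There are genuine gaps, and they sit exactly where the real difficulty of the theorem lies.

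First, your single-box estimate asks for far more than the available tools deliver, and the missing part is essentially the theorem itself. What one can prove for the trajectories crossing a box of side $L$ (this is the paper's Proposition~\ref{p_connectedness}, and is also what two-walk intersection estimates give) is that any two of them are connected by switching trajectories a bounded number of times and using an initial piece of length $O(L^2)$ of each; this yields internal diameter $O(L^2)$ inside the box, not $O(L)$. Your property~(a) with diameter $C_1L$, together with~(b), is precisely the statement of Theorem~\ref{t_ap} at scale $L$ with stretched-exponential error, so assuming it as the ``single-box input'' is circular. The ``inductive within-box renormalization'' you gesture at does not resolve this: at the base scale $\ell_0$ you face the same tension (to get failure probability $e^{-\ell_0^{c}}$ you need $\ell_0\to\infty$, but then the only available diameter bound is $O(\ell_0^2)\gg \ell_0$), and no mechanism is given for converting quadratic into linear as the scales grow. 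The paper circumvents this entirely: it never needs linear internal diameter inside a box. It writes the distance from $0$ to $n\e_1$ as a sum $\sum_{k=0}^{n}\hat T_k^n$ of one-dimensional increments along the axis, where $\hat T_k^n$ is the internal distance from $k\e_1$ to the next interlacement point on the line; each increment is supported in a box whose size $m$ has stretched-exponential tails, and the \emph{quadratic} bound $O(m^2)$ there is harmless because $m$ is typically $O(1)$. Linearity in $n$ comes from summing $n$ terms of order one, not from a linear-diameter backbone.

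Second, the quantitative conclusion does not follow from your scale choice. With $L$ a power of $\log n$, the bound $C(n/L)^d e^{-L^{c_0}}$ is of order $\exp\bigl(-(\log n)^{\gamma c_0}+d\log n\bigr)$, which tends to $0$ but is enormously larger than $e^{-n^{\delta}}$ for any $\delta>0$; to reach a stretched-exponential bound in $n$ you would need $L$ to be a power of $n$, which makes the single-box estimate even more clearly equivalent to the full theorem. A further, more minor, obstruction is your appeal to decoupling inequalities: these apply (with sprinkling $u\mapsto u(1\pm\eps)$) to monotone events, whereas your goodness event is not monotone in $u$ --- increasing $u$ adds points $y\in\I^u\cap B_L(z)$ that must be connected to the backbone, so property~(b) can be destroyed by adding trajectories. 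The paper instead manufactures genuine independence by truncating each trajectory to diameter $O(n^{2(a+\eps)})$ (the set $\tilde\I$), showing the truncation is immaterial up to $\se(n)$, and then applying a large deviation bound for sums of independent variables without exponential moments. Finally, as stated, adjacency of good boxes does not guarantee that the two backbones actually intersect or connect; this is fixable but needs to be built into the definition of a good box.
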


A corresponding result for the Bernoulli percolation on $\mathbb Z^d$ was
proved by Antal and Pisztora; in their case the constant $\delta $ equals
one and is optimal, see \cite[Theorem~1.1]{AP96}. We did not try to
optimise the constant~$\delta$ in Theorem~\ref{t_ap}.

\begin{remark}
  It is trivial to replace $\mathbb P^u_0$ by $\mathbb P$ in
  Theorems~\ref{t_shape} and~\ref{t_ap}. To this end it is only necessary
  to extend $\rho_u(x,y)$ to all $x,y\in \mathbb Z^d$ by setting
  \begin{equation*}
    \rho_u(x,y)=\rho_u(x^u,y^u),
  \end{equation*}
  where $x^u$ (respectively, $y^u$) is the closest point to~$x$
  (respectively, $y$) on $\mathcal I^u$ (one can choose the rule how ties
    are broken in any convenient translational-invariant way).
\end{remark}

\medskip

The methods used to show Theorem~\ref{t_shape} also imply the following
result.
\begin{theorem}
  \label{t_connected}
  It holds that $\IP[\I^u \text{ is connected for all }u>0] = 1$.
\end{theorem}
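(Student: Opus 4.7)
The plan is to combine Theorem~\ref{t_ap} with a monotonicity argument in~$u$.

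First, for each fixed $u>0$, I would deduce that $\I^u$ is $\IP$-a.s.\ connected from Theorem~\ref{t_ap}. Summing the bound $C'e^{-n^\delta}$ over $n$ and applying Borel--Cantelli, under $\IP^u_0$ there is a.s.\ a finite random $N$ such that $\rho_u(0,x)\le Cn<\infty$ for every $x\in\I^u\cap[-n,n]^d$ and every $n\ge N$. Since $\Z^d=\bigcup_n[-n,n]^d$, this yields $\rho_u(0,x)<\infty$ for every $x\in\I^u$, so $\I^u$ is $\IP^u_0$-a.s.\ connected, i.e., $\IP[\{\I^u\text{ connected}\}\cap\{0\in\I^u\}]=\IP[0\in\I^u]>0$. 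Since $\{\I^u\text{ connected}\}$ is translation invariant and $\I^u$ is ergodic under $\Z^d$ translations, this forces $\IP[\I^u\text{ connected}]=1$. Intersecting over a countable dense set of rationals yields that a.s.\ $\I^u$ is connected for every rational $u>0$.

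To pass to all $u>0$ simultaneously, I would use the standard monotone coupling of $\{\I^u\}_{u>0}$ via a single Poisson point process of trajectories carrying birth times; a.s.\ these birth times form a countable set $T$, and $\I^u$ is the union of ranges of trajectories with birth time at most~$u$. For $u\notin T$, $\I^u=\bigcup_{v\in\Q\cap(0,u)}\I^v$ is an increasing union of connected sets, hence connected. For $u\in T$, $\I^u=\I^{u-}\cup\Ran(\pi_u)$ where $\pi_u$ is the (unique) trajectory born at time~$u$, and connectedness reduces to $\Ran(\pi_u)\cap\I^{u-}\ne\emptyset$. By the Mecke/Palm identity, conditionally on $\pi_u$ being in the process at birth time~$u$, the rest of the configuration is independent of $\pi_u$ and has the same law as the original process; in particular $\I^{u/2}\subseteq\I^{u-}$ is, conditionally on $\pi_u$, distributed as an interlacement at level $u/2$ independent of $\pi_u$. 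The capacity identity $\IP[\I^v\cap A=\emptyset]=\exp(-v\,\capa(A))$, together with the fact that the range of a two-sided transient simple random walk in $\Z^d$, $d\ge 3$, has infinite capacity almost surely, forces $\Ran(\pi_u)\cap\I^{u/2}\ne\emptyset$ a.s., and hence $\Ran(\pi_u)\cap\I^{u-}\ne\emptyset$ a.s. A translation-invariant enumeration of the countably many trajectories (for example by the smallest $n$ with $\Ran(\pi)\cap[-n,n]^d\ne\emptyset$, with a lexicographic tiebreak) then lets one apply this simultaneously to every trajectory in the process.

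The main obstacle is the last step: verifying that the range of a transient simple random walk has infinite capacity almost surely, and setting up the Palm calculus on the Poisson process of trajectories precisely enough to extract the required conditional independence. Both ingredients are standard in the random interlacement literature but have to be executed with some care; once in place, the rest of the argument amounts to bookkeeping of countably many almost sure events.
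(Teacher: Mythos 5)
Your proposal is correct, but it follows a genuinely different route from the paper. The paper argues by contradiction: if with positive probability some $\I^{u'}$ were disconnected for a $u'$ in an interval $[u,\hat u]$, this would persist for the locally generated sets $\I^{u'}_{(n)}$, and a single application of Proposition~\ref{p_connectedness} with $\heta^{(n)}_1=\eta^{(n)}_u$ and $\heta^{(n)}_2=\eta^{(n)}_{\hat u}$ (together with~\eqref{est_num_eta}) shows that the first $\eta^{(n)}_u$ trajectories form a backbone met by every trajectory up to level $\hat u$, so that $\I^{u'}_{(n)}$ is connected \emph{simultaneously for all} $u'\in[u,\hat u]$ with probability $1-\se(n)$. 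This handles a whole interval of levels at once and never has to examine what happens exactly at the (dense, random) set of labels. You instead settle each fixed level by Theorem~\ref{t_ap} (or, equivalently, by the already known connectivity from~\cite{Szn10}) plus ergodicity, pass to non-label values of $u$ by monotone unions, and patch the countably many label values via Slivnyak--Mecke together with the fact that the range of a transient walk has infinite capacity (which does follow, e.g., from the argument of Lemma~\ref{l_diam} applied to the connected set $R(n)$ with $\diam R(n)\to\infty$). All of these steps are sound: the Mecke/Campbell identity does convert the Palm-a.s.\ statement into ``a.s.\ every trajectory of the process meets $\I^{s/2}$'', so your translation-invariant enumeration is not even needed. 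The trade-off is that your argument imports two standard but external ingredients (Palm calculus for the Poisson process on $W^\star\times[0,\infty)$ and the infinite-capacity fact), and relies on the heavy Theorem~\ref{t_ap} where the paper only needs Proposition~\ref{p_connectedness}; in exchange it makes completely explicit why no exceptional levels can occur, namely because each newly born trajectory a.s.\ hits the interlacement at any strictly lower level.
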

Previously it was known that for every fixed $u>0$, the set~$\I^u$ is
$\mathbb P$-a.s.\ connected (see (2.21) in~\cite{Szn10}); the above
theorem means that $\mathbb P$-a.s.\ there are no `exceptional values' of
the parameter~$u$. Remark also that much more is known about the
connectivity of $\mathcal I^u$ for fixed $u$, see \cite{PT11,RS12}.

Theorems~\ref{t_shape} and~\ref{t_ap} indicate that the interlacement
set~$\mathcal I^u$ looks at large scales very much like~$\mathbb Z^d$. In
the same direction, R\'ath and Sapozhnikov recently proved that the
interlacement set $\mathcal I^u$ percolates in slabs~\cite{RS11b}, and
that random walk on~$\mathcal I^u$ is transient~\cite{RS11}.

Theorem~\ref{t_ap} can be also used to answer a related question: `How
much the range of the random walk on the torus resembles the torus?' To
this end we consider $(X_k)_{k\in \mathbb N}$ to be a simple random walk
on the discrete $d$-dimensional torus of size $N$,
$\mathbb T_N^d=(\mathbb Z/N\mathbb Z)^d$, and write $P^N$ for its law
when started from the uniform distribution. We let $\I_N^u$ to denote the
range of the random walk up to time $uN^d$,
\begin{equation*}
  \I_N^u = \{X_0,\dots, X_{\lf u N^d\rf}\}.
\end{equation*}
Let $\rho^u_N(x,y)$ be the minimal distance of $x,y\in \I_N^u$ within
$\I_N^u$, defined similarly as~$\rho_u$, and let $d_N(x,y)$ be their
usual graph distance on the torus.

\begin{theorem} For large enough $\bar C$ and $\gamma $, we have
  \label{t_torus}
  \begin{equation*}
    \lim_{N\to\infty} P^N\big[\rho^u_N(x,y)\le \bar C d_N(x,y)
      \text{ for all }
      x,y\in \I_N^u \text{ such that } d_N(x,y)\ge \ln^{\gamma} N\big]=1.
  \end{equation*}
\end{theorem}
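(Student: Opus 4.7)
The plan is to combine Theorem~\ref{t_ap} with a coupling between the simple random walk on the torus and a random interlacement on sub-boxes of the torus. I rely on the Windisch--Sznitman coupling (and its subsequent refinements) which, in the form useful here, asserts the following: for any $u_-<u$ and any sub-box $B\subset\mathbb T_N^d$ of side length $r$ with $\log N\ll r\ll N^{1-\alpha}$ (for some small $\alpha>0$), there is a coupling of $(X_k)_{k\le uN^d}$ with a random interlacement such that, with probability at least $1-Ce^{-cr^{\beta}}$ for some $\beta>0$, the inclusion $\I^{u_-}\cap B\subseteq\I_N^u\cap B$ holds.

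Given such a coupling, I would fix a pair $x,y\in\mathbb T_N^d$ with $n:=d_N(x,y)\ge\ln^\gamma N$, lift them to $\Z^d$, and choose a sub-box $B_{x,y}$ of side $C_0 n$ containing both lifts, with $C_0$ large enough that any $\I_N^u$-path from $x$ of length $\le\bar Cn$ remains inside $B_{x,y}$. On the coupling event for $B_{x,y}$, every path in $\I^{u_-}$ that stays in $B_{x,y}$ is also a path in $\I_N^u$. By translation invariance of $\I^{u_-}$, Theorem~\ref{t_ap} applied at an anchor inside $B_{x,y}$, together with the triangle inequality, shows that with failure probability $\le C'e^{-n^{\delta}}$ every pair of points of $\I^{u_-}\cap B_{x,y}$ at mutual Euclidean distance $\le 2n$ is joined in $\I^{u_-}$ (and hence in $\I_N^u$) by a path of length at most $\bar Cn$.

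The delicate point is that $x$ and $y$ lie in $\I_N^u$ but need not lie in $\I^{u_-}$. To deal with this I would argue that with very high probability every point $z\in\I_N^u\cap B_{x,y}$ is within Euclidean distance $O((\log N)^{c})$ of some point of $\I^{u_-}$ (the positive density of $\I^{u_-}$ can be upgraded to a uniform estimate over all $z$ by a standard covering argument), and that this gap can be bridged inside $\I_N^u$ by a path of length $O((\log N)^{c'})$. The latter step can be carried out either by enlarging the coupling to a mesoscopic neighbourhood of $x$ and $y$ (and then invoking Theorem~\ref{t_ap} at the mesoscopic scale) or by using a short portion of the walk trajectory itself that visits $x$. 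In either case the correction is much smaller than $\bar Cn\ge\bar C\ln^\gamma N$ and can be absorbed into $\bar C$.

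Finally, I would take a union bound over all pairs $(x,y)\in(\mathbb T_N^d)^2$ with $d_N(x,y)\ge\ln^\gamma N$. The number of pairs is at most $N^{2d}$, and the per-pair failure probability is at most $Ce^{-(\ln^\gamma N)^{\delta}}=Ce^{-(\ln N)^{\gamma\delta}}$, so the total failure probability tends to zero as soon as $\gamma\delta>1$, which dictates the choice $\gamma>1/\delta$. The main technical obstacle is the third step above: producing, with the required high probability, a short $\I_N^u$-path from each torus-trace endpoint to a nearby anchor inside $\I^{u_-}$, since the trace contains points that the sub-interlacement does not.
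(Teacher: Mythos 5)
There are two genuine gaps. First, your single-box strategy breaks down for macroscopically separated pairs: the coupling you invoke is only valid for sub-boxes of side $r\ll N^{1-\alpha}$, yet $d_N(x,y)$ can be of order $N$, so the box $B_{x,y}$ of side $C_0\, d_N(x,y)$ is inadmissible for most of the pairs in your union bound. The paper avoids this entirely by proving the estimate only at the local scale $r=\ln^\gamma N$ (Lemma~\ref{l:toruslocal}), taking a union bound over the $N^d$ centres, and then \emph{chaining}: for $d_N(x,y)>\ln^\gamma N$ one picks intermediate points $x=x_1,\dots,x_n=y$ with $x_{i+1}\in B(x_i,\ln^\gamma N)$ and $\sum_{i} d_N(x_i,x_{i+1})\le 2d_N(x,y)$, and sums the local bounds. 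Some version of this chaining is unavoidable in your scheme as well.

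Second --- and you flag this yourself as the main obstacle --- the treatment of points of $\I^u_N\setminus\I^{u_-}$ is not yet an argument, and neither of the fixes you suggest obviously closes it. Euclidean proximity of $z\in\I^u_N$ to $\I^{u_-}$ gives no path in $\I^u_N$ between them; and ``a short portion of the walk trajectory through $z$'' is indeed a path in $\I^u_N$, but under the coupling that trajectory piece is \emph{not} independent of $\I^{u_-}$ (the interlacement is constructed from the walk's excursions), so showing that it meets $\I^{u_-}$, or the well-connected part of $\I^u_N$, within polylogarithmically many steps uniformly over all visited $z$ requires a genuine argument. The paper's resolution is structurally different: it does not use the interlacement comparison as a black box for these points. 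Instead it decomposes the walk in $B(x,R)$ into excursions, observes (via Lemmas~3.9, 3.10 and 4.3 of~\cite{TW11}) that the number of excursions and their entrance distribution satisfy exactly the hypotheses \eqref{heta_1}--\eqref{heta_2} under which the proof of Theorem~\ref{t_ap} (through Proposition~\ref{p_connectedness}) runs, and reruns that proof directly for the excursion process, so that all of $\I^u_N\cap B(x,r)$ except possibly the last excursion is covered; the last excursion is then handled by noting that it must intersect the cluster of the preceding ones every $O(1)$ steps. If you wish to keep your black-box use of Theorem~\ref{t_ap}, you would still need to supply this missing connection step; as written, the proposal does not prove the theorem.
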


This theorem improves the result of Shellef~\cite{She10}, where a similar
claim was proved for $\bar C$ growing very slowly with~$N$ using entirely
different methods.  More
precisely, \cite{She10} requires $\bar C=\ln^{(k)}N$ where $\ln^{(k)}$
is the $k$-times iterated logarithm, $k\ge 1$ being arbitrary. On the
other hand, Shellef needs $\gamma=5d$ only; we do not have control on the
size of this constant.

The main difficulty of the paper stems in proving our results for $d<5$,
in particular for $d=3$. In fact, for $d\ge 5$ there is a rather simple
argument, based on the results of~\cite{RS11}, which shows Theorem~\ref{t_ap}
with $\delta =1$, and which we sketch in the Appendix. This argument uses
the fact that for $d\ge 5$ the random interlacement restricted to a
thick-enough two-dimensional slab dominates in some sense the standard
Bernoulli percolation, which allows an application of~\cite{AP96}.
Heuristically, in large dimensions it is possible to construct `long straight
connections' within~$\I^u$ locally, independently of the connections in
other places.

It seems that this argument cannot be extended to $d<5$. It is much harder
to construct the straight connections locally in an independent manner.
This we do in Section~\ref{s:ap}, where we dominate the internal distance
between the origin and the point $(n,0,\dots,0)$ by the sum of a sequence of
random variables with a finite range of dependence and stretched
exponential tails, cf.~\eqref{sum_T} below. To obtain the finite range of
dependence, we should show that connections within
a large box of size~$m$ can be constructed using less than $\Theta(m^{d-2})$ random walk trajectories (which is
  the typical number of random walks intersecting this box;
here and in the sequel we write $f(m)=\Theta(g(m))$
when for positive constants $c_1,c_2$ we have
 $c_1g(m)\leq f(m) \leq c_2 g(m)$ for all~$m$). In fact, in
Proposition~\ref{p_connectedness}
we will show that a `backbone' of~$\I^u$ in this box can be constructed using $\Theta(m^{d-2-h})$ trajectories
only, $h<2/d$. This also means that for every $u>0$ the
interlacement set~$\I^u$ is `largely supercritical', that is it remains
locally connected, even when considerably thinned.

The paper is organised as follows. After introducing the notation in
Section~\ref{s_preliminaries}, we collect in Section~\ref{s_estimates}
some estimates on the hitting probabilities of sets and on the range of
the simple random walk. Section~\ref{s_intersections} contains the key
technical result of this paper, Proposition~\ref{p_connectedness}. This
proposition roughly states that all points in (a possibly thinned version
  of) the set $\I^u$ within box of size $n$ are at internal distance $n^2$,
with a very high probability. Using this proposition, in
Section~\ref{s_connected}, we give a short proof of
Theorem~\ref{t_connected}. Sections~\ref{s:ap}--\ref{s:torus} contain the
proofs of Theorems~\ref{t_ap}, \ref{t_shape}, and~\ref{t_torus}.

\medskip

\textit{Acknowledgements.} The authors would like to thank Augusto
Teixeira for many useful discussions, and Bal\'azs R\'ath for pointing
out Shellef's paper~\cite{She10}. The work of Serguei Popov was partially
supported by CNPq (301644/2011--0) and FAPESP (2009/52379--8).

\section{Preliminaries} %<<<1
\label{s_preliminaries}

In this section we fix the notation and recall the definition of the
random interlacement model.

Let $\mathbb N=\{0,1,\dots\}$ be the set of natural numbers. We denote
with $\e_1,\ldots,\e_{d}$ the coordinate vectors in~$\Z^d$, and write
$\|\cdot\|,\|\cdot\|_1, \|\cdot\|_\infty$ for the Euclidean, $\ell_1$,
and $\ell_\infty$ norms correspondingly.  We use $B(x,r)$ to denote the
closed $\|\cdot\|_\infty$-ball centred at $x$ with radius $r$, and
abbreviate $B(r):=B(0,r)$. We say that~$A\subset \mathbb Z^d$ is
connected if for any $x,y\in A$ there is a nearest-neighbor path that
lies fully inside~$A$ and connects~$x$ to~$y$. We write $|A|$ for the
cardinality of~$A$, $\diam(A)=\max_{x,y\in A} \|x-y\|_\infty$ for its
diameter in $\ell_\infty$-norm,  and
$\partial A=\{x\in A: \exists y \in A^c, \|x-y\|=1\}$ for its internal
boundary.

Let us write $P_x$ for the law of a discrete-time simple random walk
$(X_n)_{n\in \mathbb N}$ on $\mathbb Z^d$ started from $x$. For
$A\subset \mathbb Z^d$ we denote with $H_A$, $\tilde H_A$ and $T_A$ the
entrance time in~$A$, the hitting time of~$A$, and the exit time from~$A$:
\begin{equation}
  \label{def_entrance}
  \begin{split}
    H_A&=\inf\{n\ge 0: X_n\in A\}, \\
    \tilde H_A&=\inf\{n\ge 1: X_n\in A\},\\
    T_A&=\inf\{n\ge 0: X_n\notin A\}.
  \end{split}
\end{equation}
Given  $A\subset \mathbb Z^d$ finite, we define the equilibrium measure of~$A$ by
\begin{equation*}
  e_A(x)=P_x[\tilde H_A=\infty]\bbone_A(x)
\end{equation*}
and denote by
$\capa(A)=\sum_{x\in A}e_A(x)$ its total mass.

We now recall the definition of the random interlacement from
\cite{Szn10}. In order to do this we need to introduce another notation
which is, however, mostly used only locally. Let~$W$ be the space of
doubly-infinite nearest-neighbour trajectories in~$\Z^d$  which tend to
infinity at positive and negative infinite times, and let $W^\star$ be
the space of equivalence classes of trajectories in~$W$ modulo
time-shift. (These spaces are equipped with $\sigma $-algebras
$\mathcal W$, $\mathcal W^\star$ as in (1.2), (1.10) of~\cite{Szn10}.)
The random interlacement is defined via a Poisson point process taking
values in the space $\Omega $ of point measures on the space
$W^\star\times [0,\infty)$ with the intensity measure $\nu \otimes du$.
We denote by $\mathbb P$ the law of this process.

To describe the measure~$\nu$ appearing in the intensity of the Poisson
point process, for $A\subset \mathbb Z^d$, $u\ge 0$, we denote by~$\mu_A^u$
the mapping from~$\Omega$ to the space of point measures on~$W$
which selects from $\omega \in \Omega $ the trajectories with labels
smaller than~$u$ intersecting~$A$ and parametrises them so that they
enter~$A$ at time~$0$. Formally, for
$\omega =\sum_{i\ge 0}\delta_{ (w^\star_i,u_i)}\in \Omega $ ,
$w^\star_i\in W^\star$, $u^i\ge 0$, we define
\begin{equation}
  \label{e:muKu}
  \mu_K^u(\omega)=\sum_{i\ge 0}\delta_{s_A(w^\star_i)}
  \bbone\{\Ran(w^\star_i)\cap A\neq\emptyset,u_i\le u\},
\end{equation}
where $\Ran(w^\star)=\bigcup_{n\in \mathbb Z} w(n)$ for an arbitrary~$w$
in the equivalence class of~$w^\star$, and $s_A(w^\star)$ is the unique
$w\in W$ in this equivalence class such that $w_0\in A$, $w_{-n}\notin A$,
$n>0$. As follows from \cite{Szn10}, Theorem~1.1, the measure~$\nu $ is uniquely
determined by the following two properties which we will frequently use:
\begin{itemize}
  \item For every finite set $A\subset \mathbb Z^d$, under~$\mathbb P$,
  the number $\eta _A^u:=\mu_A^u(\omega )(W)$ of trajectories in~$\omega $
  with labels smaller than~$u$ entering~$A$ has the Poisson distribution
  with parameter $u\capa( A)$.

  \item Let
  $\mu_A^u(\omega )=\sum_{i=1}^{\eta_A^u}\delta_{w_i}$, $w_i \in W$. Then,
  under $\mathbb P$, $w_i$ are i.i.d., independent of $\eta_A^u$, with
  the law given by
  \begin{equation*}
    \mathbb P[(w_i(n))_{n\ge 0}\in F]=\sum_{x\in
      A}\frac{e_A(x)}{e_A(A)}P_x[F],
  \end{equation*}
  for any measurable set $F$ in the space of single-infinite
  nearest-neighbour paths. It means that~$w_i$, restricted to
  non-negative times, are i.i.d.\ simple random walk trajectories started
  from the normalised equilibrium measure $e_A(\cdot)/e_A(A)$.
\end{itemize}

The interlacement set at level~$u$ is then defined as the trace of all
trajectories in~$\omega$ with labels smaller than~$u$,
\begin{equation*}
  \I^u(\omega )=\bigcup_{i\ge 0} \Ran(w^\star_i)\bbone \{u_i\le u\}.
\end{equation*}

We now explain the conventions for the use of constants in this paper. We
denote by
$C,C_1, C_1', C_2, \ldots$ the
`global' constants, that is, those that are used all along the paper and
by $c,c',c_1, c_2,c_3,\ldots$ the `local' constants, that is, those that
are used only in the small neighbourhood of the place where they appear
for the first time. For the local constants, we restart the numeration
either in the beginning of each subsection or in the beginning of each
long proof. All these constants are positive and finite and may depend on
dimension, $u$, and other quantities that are supposed to be fixed;
usually we omit expressions like `there exist positive constants $c_1,c_2$
such that \dots' and just directly insert $c$'s to the formulas.

Also, the reader will notice that very frequently in this paper the
probability of events (indexed by some integer parameter, say, $n$) will
happen to be bounded from above by~$e^{-cn^{\delta}}$ or from below
by~$1-e^{-cn^{\delta}}$, where~$\delta$ is typically (but not necessarily)
between~$0$ and~$1$. So, we decided to use the following definition:
\begin{definition}
  \label{def_s.e.}
  We say that $f(n)$ is s.e.-small (where s.e.\ stands for
    `stretched-exponentially') if for all~$n\geq 1$ it holds that
  \[
    0 \leq f(n) \leq c_1 e^{-c_2n^{c_3}},
  \]
  and write $f(n)=\se(n)$.
\end{definition}
Observe that $n^c\se(n)=\se(n)$ for any fixed $c>0$. So, it is quite
convenient to use this notation e.g.\ in the following situation: assume
that we have at most~$n^c$ events, each of probability bounded from above
by~$\se(n)$. Then, the probability of their union is~$\se(n)$ as well.

\section{Estimates on hitting probabilities} %<<<1
\label{s_estimates}
In this section we collect several estimates on hitting probabilities of
subsets of~$\Z^d$ by random walk trajectories. We recall that $P_x$
denotes the law of the simple random walk $(X_n)_{n\in \mathbb N}$ in
$\Z^d$, $d\geq 3$, starting at $x$. We denote by~$g$ the `stopped' Green
function:
\[
  g(x,y;n) = \sum_{k=0}^n P_x[X_k=y],
\]
and write $g(x,y)$ for $g(x,y;\infty)$.
For the case $d\geq 3$ it holds that $g(x,y)$ is finite for
all $x,y\in\Z^d$, $g(x,y;n)=g(y,x;n)=g(0,y-x;n)$, and, for
all $n\geq \|x-y\|^2$
\begin{align}
  g(x,y;n) &\geq \frac{C_1}{1+\|x-y\|^{d-2}},
  \label{Green_asymp>}\\
  g(x,y)&\leq \frac{C'_1}{1+\|x-y\|^{d-2}},
  \label{Green_asymp<}
\end{align}
for all $x,y\in\Z^d$. The upper bound \eqref{Green_asymp<} follows
directly from Theorem~1.5.4 of~\cite{Law91}. The lower bound
\eqref{Green_asymp>} can be proved easily adapting the proof of the same
theorem.

For $n\geq 0, x\in\Z^d, A\subset\Z^d$, let
\[
  q_x(A;n) = P_x[H_A\le n]
\]
be the probability that, starting from~$x$, the simple random walk
enters~$A$ before time~$n$. We use the abbreviation
$q_x(y;n):=q_x(\{y\};n)$ for the hitting probabilities of one-point sets,
and $q_x(A):=q_x(A;\infty)$ for the probability that the simple random
walk ever enters the set~$A$. It is elementary to obtain that for all
$x,y\in\Z^d$ and
$n\geq \|x-y\|^2$ (see e.g.\ Theorem~2.2 of~\cite{AMP02})
\begin{equation}
  \label{prob_to_hit>}\\
  \begin{split}
    q_x(y;n) &\geq \frac{C_2}{1+\|x-y\|^{d-2}},\\
    q_x(y)&\leq \frac{C'_2}{1+\|x-y\|^{d-2}}.
  \end{split}
\end{equation}

Next, for $x \in \Z^d$ and a finite set $A \subset \Z^d$, define
\[
  g(x,A;n) = \sum_{y\in A} g(x,y;n).
\]
Clearly, $g(x,A;n)$ is the expected number of visits to~$A$ up to
time~$n$, starting from~$x$. As before, we set
$g(x,A):=g(x,A;\infty)$.

The following lemma will be used repeatedly to estimate the hitting
probabilities:
\begin{lemma}
  \label{l_hit_Green}
  For all $x\in \mathbb Z^d$, finite $A\subset \mathbb Z^d$, and
  $0\leq n\leq \infty$
  \begin{equation}
    \label{hit_Green}
    \frac{g(x,A;n)}{\max_{y\in A}g(y,A;n)} \leq q_x(A;n) \leq
    \frac{g(x,A)}{\min_{y\in A}g(y,A)}.
  \end{equation}
\end{lemma}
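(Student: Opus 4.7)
Both inequalities rest on the first-entrance decomposition of the Green-type sum $g(x,A;\cdot)$ at the stopping time $H_A$, combined with the strong Markov property. The heuristic is simply that the expected time spent in $A$ equals the probability of reaching $A$ times the expected time spent in $A$ conditionally on entry; the only trick is to handle the truncation at time $n$.

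For the \emph{upper} inequality I work with $n=\infty$, since $q_x(A;n)\le q_x(A)$ automatically. Strong Markov at $H_A$ gives
\[
  g(x,A)=\sum_{y\in A}P_x\bigl[H_A<\infty,\,X_{H_A}=y\bigr]\,g(y,A),
\]
because the total count of visits to $A$ made after time $H_A$ is, conditionally on $X_{H_A}=y$, distributed as the total count under $P_y$ and equals $g(y,A)$ in expectation. Replacing each factor $g(y,A)$ by $\min_{y\in A}g(y,A)$ and summing the probabilities over $y\in A$ gives $g(x,A)\ge q_x(A)\,\min_{y\in A}g(y,A)$, which, after division, is the claimed right-hand inequality.

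For the \emph{lower} inequality I repeat the same decomposition while keeping track of the time budget. For $0\le n\le\infty$, applying the strong Markov property at $H_A$ to the truncated sum yields
\[
  g(x,A;n)=\sum_{y\in A}\sum_{j=0}^{n}P_x[H_A=j,\,X_j=y]\,g(y,A;n-j).
\]
Since $g(y,A;n-j)\le g(y,A;n)\le\max_{y\in A}g(y,A;n)$ uniformly in $j\le n$, I can pull the maximum out and collect the remaining probabilities into $q_x(A;n)$, obtaining $g(x,A;n)\le\max_{y\in A}g(y,A;n)\cdot q_x(A;n)$. Dividing gives the left-hand inequality in \eqref{hit_Green}.

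There is no serious obstacle: the proof is a two-line application of the strong Markov property together with the trivial monotonicity $g(y,A;m)\le g(y,A;n)$ for $m\le n$. The only point worth noting is the asymmetry between the two sides of \eqref{hit_Green}, which simply reflects the fact that the time-truncated Green function on the right of the decomposition is bounded above by its maximum over $A$, while on the infinite-time side the inequality in the opposite direction uses the minimum.
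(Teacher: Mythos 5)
Your proof is correct and follows essentially the same route as the paper: the first-entrance decomposition of $g(x,A;\cdot)$ at $H_A$ via the strong Markov property, with the untruncated identity and the minimum over $A$ for the upper bound, and the truncated identity plus the monotonicity $g(y,A;n-k)\le g(y,A;n)$ and the maximum over $A$ for the lower bound. No discrepancies to report.
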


\begin{proof}
  Using the definition of $g$ and
  the strong Markov property,
  \begin{align*}
    g(x,A) &=
    \sum_{y\in A} P_x[H_A<\infty, X_{H_A}=y]g(y,A)\\
    & \geq \min_{y\in A}g(y,A)\sum_{y\in A}
    P_x[H_A<\infty, X_{H_A}=y] .
  \end{align*}
  Since $q_x(A;n)\leq q_x(A)=\sum_{y\in A} P_x[H_A<\infty, X_{H_A}=y]$,
  the second inequality in~\eqref{hit_Green} follows.
  The first inequality is then implied by
  \begin{align*}
    g(x,A;n) &= \sum_{k=0}^n
    \sum_{y\in A} P_x[H_A=k, X_{H_A}=y]g(y,A;n-k)\\
    &\leq \sum_{k=0}^n\sum_{y\in A}
    P_x[H_A=k, X_{H_A}=y]g(y,A;n)\\
    & \leq \max_{y\in A}g(y,A;n)\sum_{k=1}^n\sum_{y\in A}
    P_x[H_A=k, X_{H_A}=y],
  \end{align*}
  together with
  $q_x(A;n) = \sum_{k=0}^n\sum_{y\in A} P_x[H_A=k, X_{H_A}=y]$.
\end{proof}

Let us use the notation $\ell(x,A)=\max_{y\in A}\|x-y\|_\infty$
for the maximal distance between~$x$ and the points
of~$A$.
Two following simple lemmas contain lower bounds on
hitting probabilities of sets.
\begin{lemma}
  \label{l_diam}
  Suppose that $A$ is a connected finite subset of~$\Z^d$, containing
  at least two sites.
  Then, for all $x\in \Z^d$ and $n\geq (\ell(x,A))^2$,
  \begin{equation*}
    q_x(A;n) \geq
    \begin{cases}
      \displaystyle
      \frac{C_3\diam(A)}
      {(\ell(x,A))^{d-2}\ln \diam(A)},\qquad
      & d=3,
      \medskip
      \\
      \displaystyle
      \frac{C_3\diam(A)}{(\ell(x,A))^{d-2}},
      & d\geq 4.
    \end{cases}
  \end{equation*}
\end{lemma}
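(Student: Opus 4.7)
The plan is to apply Lemma~\ref{l_hit_Green} not to $A$ directly but to a one-dimensional ``backbone'' $A'\subset A$ of size $\diam(A)+1$ whose points are separated in a single coordinate direction. To construct $A'$, I choose $a,b\in A$ and an index $i\in\{1,\dots,d\}$ with $|a_i-b_i|=\diam(A)=:R$; connectedness of $A$ yields a nearest-neighbour path $a=y_0,y_1,\dots,y_L=b$ inside $A$. Each step changes exactly one coordinate by $\pm 1$, so the $i$-th coordinate along the path varies by at most one per step and must attain every integer value between $a_i$ and $b_i$. I keep, for each such value $v$, one point of the path whose $i$-th coordinate equals $v$, collecting these into $A'\subset A$. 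The resulting set has $|A'|=R+1$, and any two of its points differ in the $i$-th coordinate, so their Euclidean (and $\ell_\infty$) distance is at least the absolute difference of those coordinates.

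Since $A'\subset A$, $q_x(A;n)\ge q_x(A';n)$, and it suffices to estimate the ratio from Lemma~\ref{l_hit_Green} applied to $A'$. For the numerator I use~\eqref{Green_asymp>}: every $z\in A'$ satisfies $\|x-z\|\le\sqrt d\,\ell(x,A)$, and $n\ge\ell(x,A)^2$ makes the lower bound applicable (up to absorbing a constant), giving
\[
 g(x,A';n)\ge \frac{c(R+1)}{\ell(x,A)^{d-2}}.
\]
For the denominator I use~\eqref{Green_asymp<}: for any $y\in A'$, the coordinate separation of the chosen points yields
\[
 g(y,A';n)\le g(y,A')\le \sum_{z\in A'}\frac{C'_1}{1+\|y-z\|^{d-2}}
 \le C'_1+2C'_1\sum_{k=1}^{R}\frac{1}{k^{d-2}},
\]
and this last sum is $O(\ln R)$ for $d=3$ and uniformly $O(1)$ for $d\ge 4$. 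Dividing the two bounds produces exactly the two cases in the statement.

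The only real subtlety lies in the construction of $A'$: an arbitrary connecting path in $A$ could be highly tangled and would leave the denominator of order $R^2$, whereas projecting the path to one fixed coordinate reduces the denominator to an essentially one-dimensional sum, which is what produces the gain of a factor $R$ in dimension $d\ge 4$ and $R/\ln R$ in dimension $d=3$.
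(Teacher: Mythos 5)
Your proof is correct and takes essentially the same route as the paper's: both extract from $A$ a subset $A'$ of about $\diam(A)$ points separated in a single coordinate (the paper via the observation that the projection of the connected set $A$ onto a coordinate axis is an interval of length $\ge\diam(A)$, you via the discrete intermediate-value property of a connecting path), and then apply Lemma~\ref{l_hit_Green} with the Green function bounds \eqref{Green_asymp>}--\eqref{Green_asymp<}, the one-dimensional separation giving the $O(\ln\diam(A))$ (resp.\ $O(1)$) bound on the denominator. No substantive difference.
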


\begin{proof}
  Since~$A$ is connected,  it is possible to find (not necessarily
    connected) set $A'\subset A$ with the following properties:
  \begin{itemize}
    \item $|A'|=\diam(A)$,
    \item one can represent
    $A'=\{x_1,\ldots,x_{\diam(A)}\}$ in such a way that
    $\|x_i-x_j\|_\infty\geq |i-j|$ for all $i,j=1,\ldots,\diam(A)$.
  \end{itemize}
  Indeed, it holds that the size of the projection of~$A$ on one of the
  coordinate axes is at least $\diam(A)$ and this projection is an
  interval; then, for all points in the projection pick exactly one
  element of~$A$ that projects there, and erase `unnecessary' points of~
  $A$. Then, by~\eqref{Green_asymp>} we have for any $n\geq (\ell(x,A))^2$
  \[
    g(x,A';n) \geq \frac{C_1\diam(A)}{1+(\ell(x,A))^{d-2}},
  \]
  and, by~\eqref{Green_asymp<},
  for any $y\in A'$,
  \[
    g(y,A';n) \leq \sum_{j=0}^{\diam(A)}\frac{2C'_1}{1+j^{d-2}} \leq
    \begin{cases}
      c_1\ln \diam(A), & d=3,\\
      c_1, & d\geq 4.
    \end{cases}
  \]
  Since $q_x(A;n)\geq q_x(A';n)$ for all~$n$, the claim follows
  from Lemma~\ref{l_hit_Green}.
\end{proof}

The previous lemma works well for sparse connected sets. For more densely
packed sets we need another estimate:

\begin{lemma}
  \label{l_ball}
  For all $x\in\Z^d$, finite $A\subset\Z^d$ containing
  at least two sites, and all $n\geq (\ell(x,A))^2$,
  \begin{equation*}
    q_x(A;n) \geq \frac{C_4|A|^{1-\frac{2}{d}}}{(\ell(x,A))^{d-2}}.
  \end{equation*}
\end{lemma}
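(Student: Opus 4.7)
The plan is to apply Lemma~\ref{l_hit_Green} in exactly the same spirit as in the proof of Lemma~\ref{l_diam}, but with a different pair of bounds on the numerator and denominator of~\eqref{hit_Green}; the difference is that here the set $A$ is allowed (or expected) to be densely packed, so we trade the "diameter-type" estimate for a "volume-type" one.

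First I would bound $g(x,A;n)$ from below. Since $\|x-y\|\le\ell(x,A)$ for every $y\in A$ and $n\ge(\ell(x,A))^2\ge\|x-y\|^2$, inequality~\eqref{Green_asymp>} gives
\[
g(x,y;n)\ge\frac{C_1}{1+(\ell(x,A))^{d-2}}
\]
for each $y\in A$; summing over $A$ yields
\[
g(x,A;n)\ge\frac{C_1|A|}{1+(\ell(x,A))^{d-2}}.
\]

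The main step is to show that, uniformly over $y\in A$,
\[
g(y,A;n)\le g(y,A)\le c\,|A|^{2/d}.
\]
By \eqref{Green_asymp<}, $g(y,A)\le\sum_{z\in A}C_1'/(1+\|y-z\|^{d-2})$. The right-hand side is maximised, given the cardinality constraint $|A|$, when the mass of $A$ is concentrated as close to $y$ as possible. Concretely, choose $R=c_2|A|^{1/d}$ so that the ball $B(y,R)$ contains at least $|A|$ points; then
\[
\sum_{z\in A}\frac{C_1'}{1+\|y-z\|^{d-2}}\le\sum_{z\in B(y,R)}\frac{C_1'}{1+\|y-z\|^{d-2}}\le C_1'+c_3\sum_{r=1}^{R}\frac{r^{d-1}}{r^{d-2}}\le c_4R^2=c_5|A|^{2/d},
\]
where I grouped the points of $B(y,R)$ by their $\ell_\infty$-distance $r$ to $y$ and used that there are $\Theta(r^{d-1})$ points at distance~$r$.

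Plugging both bounds into~\eqref{hit_Green} gives
\[
q_x(A;n)\ge\frac{g(x,A;n)}{\max_{y\in A}g(y,A;n)}\ge\frac{C_1|A|/(1+(\ell(x,A))^{d-2})}{c_5|A|^{2/d}}\ge\frac{C_4|A|^{1-2/d}}{(\ell(x,A))^{d-2}},
\]
using that $\ell(x,A)\ge 1$ (which holds because $|A|\ge 2$). The only non-routine ingredient is the rearrangement argument bounding $g(y,A)$ by $c|A|^{2/d}$; everything else is a direct application of the standard Green-function estimates~\eqref{Green_asymp>}--\eqref{Green_asymp<} together with Lemma~\ref{l_hit_Green}.
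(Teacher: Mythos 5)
Your proof is correct and follows essentially the same route as the paper: the lower bound $g(x,A;n)\ge C_1|A|/(1+(\ell(x,A))^{d-2})$ from \eqref{Green_asymp>}, the worst-case rearrangement bound $\max_{y\in A}g(y,A;n)\le c|A|^{2/d}$ obtained by packing $A$ into a ball of radius $\Theta(|A|^{1/d})$ around $y$, and then Lemma~\ref{l_hit_Green}. No gaps.
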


\begin{proof}
  Again using~\eqref{Green_asymp>}, we have for any $n\geq (\ell(x,A))^2$
  \[
    g(x,A;n) \geq \frac{C_1|A|}{1+(\ell(x,A))^{d-2}}.
  \]
  To obtain an upper bound on $g(y,A;n)$ for $y\in A$, we
  observe that
  \[
    |\{x\in\Z^d: \|x\|\in[k,k+1)\}| = \Theta(k^{d-1}).
  \]
  So, using~\eqref{Green_asymp<}, we have for $y\in A$
  \begin{align*}
    g(y,A;n) &\leq \sum_{z\in A} \frac{C'_1}{1+\|y-z\|^{d-2}}\\
    &\leq \sum_{k=0}^{c_3|A|^{1/d}}
    \frac{C'_1 c_4k^{d-1}}{1+k^{d-2}}\\
    &\leq c_5 |A|^{2/d},
  \end{align*}
  where we have used an obvious worst-case estimate (all the points of~$A$
    are grouped around~$y$, forming roughly a ball of radius
    $\Theta(|A|^{1/d})$) on the passage from the first to the second line of
  the above display. Then, applying Lemma~\ref{l_hit_Green} we conclude
  the proof of Lemma~\ref{l_ball}.
\end{proof}

We end this section by stating a few well-known facts about the behavior
of the set of sites visited by a simple random walk by time~$n$. As we
could not locate suitable references, we also sketch their proofs.
\begin{lemma}
  \label{l_range_SRW}
  Suppose that $d\geq 3$ and let
  $R(n)=\{X_0,\ldots,X_n\}$ be the set of sites visited by a simple random
  walk by time~$n$. Then,
  for any fixed $\alpha_1\in (0,1)$,
  \begin{equation*}
    P\big[n^{1-\alpha_1}\leq
      \diam(R(n^2))\leq n^{1+\alpha_1},
      |R(n^2)|\geq n^{2-2\alpha_1}\big]
    \geq 1-\se(n).
  \end{equation*}
\end{lemma}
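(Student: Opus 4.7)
The plan is to establish the three claims separately; the full statement then follows by a union bound. The two diameter bounds are standard, while the lower bound on $|R(n^2)|$ is the subtler part.

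For the upper bound $\diam(R(n^2)) \leq n^{1+\alpha_1}$, each coordinate $X_k^{(i)}$ is a martingale with increments in $\{-1,0,1\}$, so Azuma--Hoeffding together with L\'evy's maximal inequality yields
\[
\IP\Big[\max_{k\leq n^2}|X_k^{(i)}|\geq \tfrac{1}{2d}n^{1+\alpha_1}\Big] \leq c_1\exp(-c_2 n^{2\alpha_1}) = \se(n).
\]
Summing over the $d$ coordinates and using $\diam(R(n^2))\leq 2\max_{k\leq n^2}\|X_k\|_\infty$ finishes this part. For the lower bound $\diam(R(n^2))\geq n^{1-\alpha_1}$, note that since $X_0=0\in R(n^2)$, the event $\diam(R(n^2))<n^{1-\alpha_1}$ forces $T_{B(n^{1-\alpha_1})}>n^2$. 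The standard estimate that from any $y\in B(L)$ the walk exits $B(L)$ within $cL^2$ steps with probability at least $\tfrac12$, iterated via the strong Markov property, gives $\IP[T_{B(L)}>kL^2]\leq 2^{-k}$; taking $L=n^{1-\alpha_1}$ and $k=n^{2\alpha_1}$ yields the required $\se(n)$ bound.

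For the lower bound $|R(n^2)|\geq n^{2-2\alpha_1}$, the time-reversal identity gives $\IP[X_k \text{ is a new site}] = P_0[\tilde H_0 > k] \geq 1/g(0,0)$ for every $k\geq 0$, hence $\IE|R(n^2)|\geq c n^2$. Since the target $n^{2-2\alpha_1}$ is much smaller than this mean, only very crude concentration is needed, but promoting a variance bound (which would yield only polynomial decay) to a stretched-exponentially small tail is the main obstacle. I would partition $[0,n^2]$ into $K=\lceil 2n^{2-2\alpha_1}/c\rceil$ disjoint blocks of length $L=\lfloor n^{2\alpha_1}\rfloor$; by the strong Markov property at the block endpoints, $|R(n^2)|$ dominates the number of blocks whose trajectory visits at least one site outside the range accumulated by the start of the block. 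The diameter upper bound already established guarantees that this accumulated range lies in a ball of radius at most $n^{1+\alpha_1}$, and a short argument combining transience with the heat-kernel estimate \eqref{Green_asymp<} (to bound the probability that the next $L$-step walk segment stays entirely inside a given set of size $\le n^2$ away from $1$) yields a uniform lower bound $p>0$ on the conditional success probability of each block indicator. A Chernoff bound for a sum of conditionally Bernoulli indicators, or equivalently Azuma applied to the compensated martingale, then produces a failure probability of order $\exp(-cK)=\exp(-cn^{2-2\alpha_1})$, which is $\se(n)$.
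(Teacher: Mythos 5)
Your treatment of the two diameter bounds is fine and matches the standard arguments (the paper simply cites Lemma~1.5.1 of Lawler for the upper bound and uses a subdivision argument for the lower bound). The gap is in the third claim, $|R(n^2)|\ge n^{2-2\alpha_1}$, and it lies precisely in the step you flag as "a short argument": the asserted uniform lower bound $p>0$ on the conditional probability that an $L$-step block with $L=\lfloor n^{2\alpha_1}\rfloor$ visits a site outside a given set $S$ with $|S|\le n^2$ is \emph{false} when $\alpha_1$ is small. Take $S=B(x,n^{2/d})$, a ball around the block's starting point $x$; it has $\Theta(n^2)$ sites, so it is an admissible "accumulated range" as far as your size constraint is concerned, and an $L$-step walk started at $x$ stays inside it (hence visits no new site) with probability $1-o(1)$ as soon as $2\alpha_1<4/d$. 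Quantitatively, the occupation-time bound you would get from \eqref{Green_asymp<} is $g(x,S;L)\le C|S|^{2/d}\le Cn^{4/d}$, so the trapping probability is only forced below $1$ when $L\gg n^{4/d}$, i.e.\ $\alpha_1>2/d$. One cannot restrict to that range: since every inequality in the lemma becomes weaker as $\alpha_1$ increases, the statement for small $\alpha_1$ is the essential one (and it is the regime actually used in the paper, where $\alpha_1=\eps$ is taken small). Ruling out such "dense" accumulated ranges would require a separate argument about the geometry of the range of a transient walk, which you do not supply; the size bound $|S|\le n^2$ alone is not enough.

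The fix is to reverse your decomposition, which is what the paper does: split $[0,n^2]$ into $c^{-1}n^{2\alpha_1}$ \emph{long} blocks of length $cn^{2-2\alpha_1}$ and declare a block successful if the walk visits at least $n^{2-2\alpha_1}$ distinct sites \emph{within that block alone}. Success then depends only on the block's own increments, so the indicators are genuinely independent and there is no interaction with the past range to control. Each block succeeds with a constant probability $p$: by your own time-reversal identity the expected number of distinct sites in a block of length $\ell$ is at least $c'\ell$, the number is trivially at most $\ell$, and the reverse Markov inequality (the paper's footnote) gives $P[\#\ge c'\ell/2]\ge c'/2$; taking $c$ large makes $c'\ell/2\ge n^{2-2\alpha_1}$. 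Since a single successful block already yields $|R(n^2)|\ge n^{2-2\alpha_1}$, one only needs \emph{one} success, and the failure probability is $(1-p)^{c^{-1}n^{2\alpha_1}}=\se(n)$. This avoids both the conditional-probability issue and the constant-factor bookkeeping in your Chernoff step.
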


\begin{proof}
  The upper bound on the diameter follows from
  any convenient large deviation bound on the
  displacement of the simple random walk (e.g.\ Lemma~1.5.1
of~\cite{Law91}).

  To control the diameter and the number of visited sites from below, we
  use the following simple argument: We divide  the temporal interval
  $[0,n^2]$ into $c^{-1}n^{2\alpha_1}$ subintervals of length
  $cn^{2-2\alpha_1}$, for a large enough~$c$. Clearly, on each of the
  subintervals of length $cn^{2-2\alpha_1}$ the maximal displacement of
  the simple random walk is at least $n^{1-\alpha_1}$ with a constant
  probability, e.g.,\ by the central limit theorem. Noting that  by
  time~$k$ the number of visited sites is at most~$k$, and that the
  expectation of this number is at least $c'k$ 
(it is straightforward to obtain this from~\eqref{Green_asymp>}),
  we deduce that also with at least constant probability%
  \footnote{For any random variable~$\xi$ with $0\leq \xi\leq a$ a.s.\
    and $E \xi\geq b$, it is true that $P[\xi\geq b/2]\geq b/(2a)$.} the
  number of different sites visited by the random walk during a fixed
  temporal interval of length $cn^{2-2\alpha_1}$ is at least
  $n^{2-2\alpha_1}$ (if $c$ is large enough).
  Finally, to estimate the
  probability that the event of interest occurs on at least one of the
  $c^{-1}n^{2\alpha_1}$ subintervals, use the independence. The claim
  then follows easily.
\end{proof}

We also need an estimate on the number of different sites
visited by \emph{several} random walks:
\begin{lemma}
  \label{l_range_many}
  Consider~$k$ independent simple random walks
  $(X^{(1)}_j)_{j\ge 0},\ldots,(X^{(k)}_j)_{j\ge 0}$ started from arbitrary points
  $x^{(1)},\dots, x^{(k)}$, and denote
  $R_j(m)=\{X^{(j)}_0,\ldots,X^{(j)}_m\}$, $j=1,\ldots,k$.
  Assume that $n^{h}\leq k\leq n^{d-2}$ for some fixed $h\in(0,d-2)$.
  Then, for any~$\alpha_3\in(0,h)$ we have
  \begin{equation*}
    P\Big[\Big|\bigcup_{j=1}^k R_j(n^2)\Big|\geq kn^{2-\alpha_3}\Big]
    \geq 1-\se(n).
  \end{equation*}
\end{lemma}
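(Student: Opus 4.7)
The plan is to combine a first-moment bound on $V := \lvert\bigcup_{j=1}^k R_j(n^2)\rvert$ with McDiarmid's bounded-differences inequality. Writing
\[
\IE[V] = \sum_{y \in \Z^d}\Bigl(1 - \prod_{j=1}^k \bigl(1 - q_{x^{(j)}}(y; n^2)\bigr)\Bigr) \ge \sum_{y}\bigl(1 - e^{-S(y)}\bigr),
\]
where $S(y):=\sum_{j=1}^{k} q_{x^{(j)}}(y;n^2)$, I would first argue that $\IE[V] \geq c k n^2$ for some $c = c(d) > 0$, uniformly in the starting points. The extremal case is that of all walks starting at a common point $x_0$: the lower bound in~\eqref{prob_to_hit>} gives
\[
\IE[V] \geq c_1 \sum_{r=1}^{n} r^{d-1}\bigl(1 - \exp(-c_2 k/r^{d-2})\bigr),
\]
which I would evaluate by splitting at the critical radius $r_0:=k^{1/(d-2)}$: the inner part $r \le r_0$ contributes $\Theta(k^{d/(d-2)})$ since each summand is at least $1-e^{-1}$, and the outer part $r_0 < r \le n$ contributes $\Theta(k(n^2 - r_0^2))$ via $1 - e^{-s} \ge s/2$. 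The hypothesis $k \leq n^{d-2}$ ensures $r_0 \le n$, and combining the two regimes yields $\Theta(k n^2)$ in both the subcase $k \le (n/2)^{d-2}$ (outer term dominates) and $(n/2)^{d-2} < k \le n^{d-2}$ (inner term dominates). For general starting configurations, I would partition $\Z^d$ into boxes of side $\sim n^{1+\alpha_1}$ for $\alpha_1$ small; with s.e.-high probability each walk stays in a slight enlargement of its starting box, so non-adjacent boxes contribute disjoint ranges, and the single-point bound applied within each box gives $\IE[V] \geq c \sum_{\ell} k_\ell n^2 = c k n^2$.

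Next I would apply McDiarmid's bounded-differences inequality: $V$ is a function of $k$ independent walks, and modifying a single walk alters $V$ by at most $n^2 + 1$ (the maximum range size of one walk of length $n^2$), so
\[
\IP\bigl[V \leq \IE[V] - t\bigr] \leq \exp\Bigl(-\frac{2t^2}{k(n^2+1)^2}\Bigr).
\]
Choosing $t := \IE[V] - k n^{2-\alpha_3} \geq c k n^2/2$ for $n$ large (since $k n^{2-\alpha_3}$ is of lower order than $c k n^2$), the right-hand side is at most $\exp(-c^2 k/4) \leq \exp(-c^2 n^{h}/4) = \se(n)$, which is the desired tail bound.

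The main obstacle is making the first-moment lower bound uniform in the starting configuration. Concentrating all walks at one point is intuitively the worst case for $\IE[V]$, but the formal reduction requires the box-partitioning argument outlined above, together with stretched-exponential control on the rare event that walks originating in non-adjacent boxes produce intersecting ranges.
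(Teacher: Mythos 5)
Your overall strategy is genuinely different from the paper's and, once one step is repaired, it works; in some respects it is cleaner. The paper never computes the first moment of the full union: it splits the $k$ walks into $n^{\alpha_3}$ groups of $kn^{-\alpha_3}$ walks, shows each group's range exceeds $kn^{2-\alpha_3}$ with \emph{constant} probability (first moment plus the trivial deterministic upper bound $|V|\le ckn^{2-\alpha_3}$ --- no concentration inequality at all), and then boosts to $1-\se(n)$ by independence across the groups; this is also why the paper needs $\alpha_3<h$. Your route --- a single first-moment bound $\IE[V]\ge ckn^2$ for the whole union followed by McDiarmid's bounded-differences inequality with increments $n^2+1$ --- is valid as a concentration argument: the exponent $2t^2/(k(n^2+1)^2)$ with $t\asymp kn^2$ is of order $k\ge n^h$, which is $\se(n)$, and it even yields the conclusion for every $\alpha_3>0$ (indeed the stronger bound $|V|\ge ckn^2$), not just $\alpha_3\in(0,h)$.

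The gap is in the first-moment step. You correctly flag uniformity over starting configurations as the main obstacle, but your proposed fix does not close it: the claim that a common starting point is extremal for $\IE[V]$ is asserted, not proved (and is not obvious --- it is not clear that $\IE[V]$ is monotone under ``merging'' starting points), and the box-partitioning argument only reduces the problem to an arbitrary configuration \emph{inside} a box of side $n^{1+\alpha_1}$, where ``the single-point bound'' still does not apply. Fortunately no extremal-case reduction is needed. For $y\in\Z^d$ let $\zeta(y)$ be the number of indices $j$ with $\|x^{(j)}-y\|\le n$. By \eqref{prob_to_hit>},
\begin{equation*}
  P[y\in V]\;\ge\;1-\Bigl(1-\tfrac{C_2}{1+n^{d-2}}\Bigr)^{\zeta(y)}
  \;\ge\;1-\exp\Bigl(-\tfrac{C_2\,\zeta(y)}{1+n^{d-2}}\Bigr)
  \;\ge\;\frac{c\,\zeta(y)}{n^{d-2}},
\end{equation*}
where the linearisation in the last step is legitimate precisely because $\zeta(y)\le k\le n^{d-2}$ keeps the exponent bounded --- this is where the hypothesis $k\le n^{d-2}$ enters. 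Summing over $y$ and using $\sum_y\zeta(y)=k\cdot\Theta(n^d)$ gives $\IE[V]\ge ckn^2$ uniformly in the starting points, in two lines. (This is exactly the paper's own first-moment computation, applied there to a subgroup.) With that substitution your proof is complete; your explicit evaluation of the common-starting-point case, while correct, can be discarded.
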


\begin{proof}
  We use a similar argument as in the previous proof.
  We divide the~$k$ walks into
  $c^{-1}n^{\alpha_3}$ groups, each containing $ckn^{-\alpha_3}$
  walks. Consider now the $ckn^{-\alpha_3}$ walks of the, say,
  first group, suppose that they are labelled from~$1$
  to~$ckn^{-\alpha_3}$. Let
  \[
    V=\bigcup_{j=1}^{ckn^{-\alpha_3}} R_j(n^2)
  \]
  be the set of sites visited by the walks from the first group. For
  $y\in\Z^d$, define
  \[
    \zeta(y) = \sum_{j=1}^{ckn^{-\alpha_3}} \1{\|x^{(j)}-y\|\leq n}
  \]
  to be the number of walks of the first group that start at
  distance at most~$n$ from~$y$. By~\eqref{prob_to_hit>},
  using $\zeta (y)\le k\le n^{d-2}$, we have
  \[
    P[y\in V] = 1-\prod_{j=1}^{ckn^{-\alpha_3}}
    \big(1-q_{x^{(j)}}(y;n^2)\big)
    \geq \frac{c'\zeta(y)}{n^{d-2}}.
  \]
  So, if $c$ is large enough
  \begin{align*}
    E|V| &= \sum_{y\in\Z^d}P[y\in V] \\
    & \geq \frac{c'}{n^{d-2}} \sum_{y\in\Z^d}\zeta(y)\\
    & \ge \frac{c'ckn^{-\alpha_3}}{n^{d-2}}
    \big|\{y:\|y\|\leq n\}\big|\\
    &\geq 2kn^{2-\alpha_3}.
  \end{align*}
  Since, trivially, $|V|\leq ckn^{2-\alpha_3}$, it holds that
  $|V|\geq kn^{2-\alpha_3}$ with at least a constant probability.
  As the same
  reasoning applies to each of the $c^{-1}n^{\alpha_3}$ groups,
  the claim of the lemma follows by independence.
\end{proof}

\section{Intersections of random walks} %<<<1
\label{s_intersections}

In this section we show that the set of points visited by sufficiently many
walks started in~$B(n)$ is typically well connected; the precise statement of
this fact is contained in Proposition~\ref{p_connectedness}.

To state this proposition we need some notation.
We consider
%a sequence of sets $G_n\subset \Z^d$
%such that, for some~$C_5>0$ it holds that
%$B(C_5n)\subset G_n\subset B(n)$, and
two
sequences of positive random variables
$\heta^{(n)}_1, \heta^{(n)}_2$ satisfying
$\heta^{(n)}_1\le \heta^{(n)}_2$ and
\begin{align}
  \IP[\heta^{(n)}_1\geq C_5 n^{d-2-h}] &\geq 1-\se(n),
  \label{heta_1}
  \\
  \IP[\heta^{(n)}_2\leq C_6 n^{M}] &\geq 1-\se(n),
  \label{heta_2}
\end{align}
for some $h<\frac{2}{d}$ and $M>0$.
Let $X^{(1)},\ldots,X^{(\heta^{(n)}_2)}$ be
$\heta^{(n)}_2$ independent simple random walks starting from
some sites $x^{(1)},\ldots,x^{(\heta^{(n)}_2)}\in B(n)$. We write $P$
for the joint distribution of these walks.
Let $R_k(m)=\{X^{(k)}_0,\ldots,X^{(k)}_m\}$
be the set of different sites visited by
$k$th random walk until time~$m$.  We write $H^k_A$, $\tilde H^k_A$ for
the entrance and hitting time of~$A$ by random walk $X^{(k)}$
(recall~\eqref{def_entrance}).

\begin{definition}
  \label{def_(...)-connected}
  For integers $s,m\geq 1$
  we say that $X^{(i)}$ is $(s,m)$-connected to $X^{(j)}$ if there exist
  a sequence of integers $i=k_0,k_1,\ldots,k_s=j$ such that
  \begin{equation*}
    \begin{split}
      &k_t\leq \heta^{(n)}_1,
      \qquad \text{for all }t=0,\dots,s\\
      &R_{k_t}(m)\cap R_{k_{t-1}}(m)\neq \emptyset,
      \qquad \text{for all }t=1,\dots,s.
    \end{split}
  \end{equation*}
  (We do not indicate the dependence on~$n$ in order to keep the notations
    not too heavy.)
\end{definition}
In words, the definition says that
the trajectories are $(s,m)$-connected if one can go from the
starting point of the $i$th trajectory to the starting point of the $j$th
trajectory within the cluster of the first $\heta^{(n)}_1$ trajectories,
by changing no more than~$s$ times the trajectory,
and using at most~$m$ sites in the beginning of each trajectory.

Let us define for $k\leq \heta^{(n)}_2$ the following set of integers:
\[
  L_k = \big\{m\geq 1:
    \{X^{(k)}_{3mn^2},\ldots,X^{(k)}_{3(m+1)n^2-1}\}\cap B(n)\neq \emptyset \big\},
\]
and let
\begin{equation}
  \label{e:Jn}
  J^{(n)} = \{k\leq \heta^{(n)}_2 : |L_k|=0\}
\end{equation}
be the index set of the walks that do not come back to~$B(n)$
after the time~$3n^2$.

For $d\geq 3$ and $h<\frac{2}{d}$, define
\begin{equation}
  \label{def_beta_d}
  \beta(d,h) := \min\Big\{k\geq 1:
    \frac{dh}{2}+\Big(d-3+h-\frac{dh}{2}\Big)
    \Big(1-\frac{2}{d}\Big)^{k-1}<1\Big\}
\end{equation}
(in fact, this quantity represents the necessary number of steps in the recursive construction used in the proof of Proposition~\ref{p_connectedness}, see~\eqref{def_a_n} 
and~\eqref{recursion_a_n}; at this point we only observe that
$\beta(d,h)$ is finite since $\frac{dh}{2}<1$).

The following proposition plays the key role in this paper:
\begin{proposition}
  \label{p_connectedness}
  Let
  $\heta^{(n)}_1$, $\heta^{(n)}_2$ and $X^{(k)}$,
  $k\le \heta^{(n)}_2$, be as above. Then
  \begin{equation}
    \label{n2connected}
    P\big[\forall i,j\leq \heta^{(n)}_1, X^{(i)}\text{ and } X^{(j)}
      \text{ are $\big(2\beta(d,h)+1,2n^2\big)$-connected}\big]
    \geq 1-\se(n).
  \end{equation}
  Moreover,
  \begin{equation}
    \label{connected_lower_level}
    P\big[\forall i\leq \heta^{(n)}_2 \ \exists j\leq \heta^{(n)}_1
      \text{ such that } R_i(n^2)\cap R_j(2n^2)\neq\emptyset\big]
    \geq 1-\se(n),
  \end{equation}
  and
  \begin{align}
    \label{connected_later_times}
    \begin{split}
      P\big[&\forall i\leq \heta^{(n)}_2\ \forall m\in L_i\
        \exists j\leq \heta^{(n)}_1
        \text{ such that }  \\
        & ~~~\{X^{(i)}_{3mn^2},\ldots,X^{(i)}_{3(m+1)n^2-1}\}
        \cap R_j(2n^2)\neq\emptyset\big]
      \geq 1-\se(n).
    \end{split}
  \end{align}
\end{proposition}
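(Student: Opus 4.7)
The plan is to build a large ``giant cluster'' of walks by an iterative multiscale construction that terminates in at most $\beta(d,h)$ stages. I describe the proof of~\eqref{n2connected} first; the other two assertions~\eqref{connected_lower_level} and~\eqref{connected_later_times} will follow from variants of the same hitting estimate.

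Setup. By Lemma~\ref{l_range_SRW} combined with a large-deviation bound on the displacement of the simple random walk, and a union bound over the at most $n^M$ trajectories (using $n^M\se(n)=\se(n)$), one may restrict to the event that, simultaneously for every $k\le\heta^{(n)}_2$, the range satisfies $|R_k(n^2)|\ge n^{2-2\alpha_1}$ and the trajectory $(X^{(k)}_j)_{j\le 2n^2}$ stays in a box $B(Cn)$ for a universal constant $C$. Consequently, every cluster we construct is at $\ell_\infty$-distance $O(n)$ from every starting point in $B(n)$, and Lemmas~\ref{l_ball} and~\ref{l_diam} apply with $\ell(x,A)=O(n)$ and time budget $2n^2\ge \ell(x,A)^2$.

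Iterative cluster growth. Pick walk $1$ as a seed and for $s\ge 0$ let $\mathcal{C}_s\subset\{1,\dots,\heta^{(n)}_1\}$ denote the walks that are $(s,2n^2)$-connected to walk $1$, with $V_s=\bigcup_{k\in\mathcal{C}_s}R_k(2n^2)$. The induction shows that, with probability $\ge 1-\se(n)$, the cluster $V_s$ has size (or more generally, capacity) growing according to a recursion that precisely matches the one built into~\eqref{def_beta_d}. For the inductive step, after revealing the walks already in $\mathcal{C}_s$ (hence $V_s$), each walk $k$ not yet in $\mathcal{C}_s$ is a conditionally independent simple random walk from $B(n)$; Lemma~\ref{l_ball} bounds below the probability that it enters $V_s$ by time $2n^2$, a Chernoff bound on the resulting sum of conditionally independent indicators produces a stretched-exponential lower bound on the number of newly joined walks, and Lemma~\ref{l_range_many} converts this count into the required lower bound on $|V_{s+1}|$. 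The integer $\beta(d,h)$ defined in~\eqref{def_beta_d} is precisely the number of iterations needed for the failure exponent to cross the threshold $1$, i.e., for every walk in $\{1,\dots,\heta^{(n)}_1\}$ to have joined the cluster with probability $1-\se(n)$. Consequently any pair $i,j\le\heta^{(n)}_1$ lies within $\beta(d,h)$ cluster-edges of the seed, hence within $2\beta(d,h)\le 2\beta(d,h)+1$ of each other, proving~\eqref{n2connected}. For~\eqref{connected_lower_level}, rerun the construction with $V_s$ built from only the first $n^2$ steps of each seed walk: the same hitting bound (Lemma~\ref{l_ball}) shows that every walk $k\le\heta^{(n)}_2$ enters $V_{\beta(d,h)}$ by its own time $n^2$ with probability $\ge 1-\se(n)$. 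For~\eqref{connected_later_times}, apply the strong Markov property at each time $3mn^2$ with $m\in L_i$: conditional on $X^{(i)}_{3mn^2}\in B(n)$, the shifted walk $(X^{(i)}_{3mn^2+r})_{r\ge 0}$ is a fresh simple random walk from $B(n)$, so the same hitting argument applies to its first $3n^2$ steps; a union bound over $i\le\heta^{(n)}_2\le n^M$ and $m\le n^M$ (using the diameter bound on $R_i$) completes the argument.

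The main technical obstacle is the inductive growth step. The set $V_s$ must be realised in a way that is measurable with respect to the batch of already-revealed trajectories while keeping the remaining walks conditionally i.i.d.\ simple random walks from $B(n)$; this is achieved by revealing trajectories in batches indexed by the stage at which they first meet the current cluster. Moreover, the naive ``number of sites'' recursion $\gamma_{s+1}=2-h+\gamma_s(1-2/d)$ converges to a fixed point strictly below the full volume exponent, so the final saturation step in~\eqref{def_beta_d} must use that the capacity of a union of random walk traces in $B(Cn)$ is comparable to its cardinality in $d\ge 3$---this is what forces the condition $h<2/d$ and pins down the precise definition of $\beta(d,h)$. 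Carefully tracking these bounds across the $\beta(d,h)$ iterations, while maintaining stretched-exponential error control, is the bulk of the work.
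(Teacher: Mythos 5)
Your overall architecture (iterated cluster growth governed by the recursion behind~\eqref{def_beta_d}, Lemma~\ref{l_ball} for the hitting step, Lemma~\ref{l_range_many} for the volume step) matches the paper's, but the way you propose to handle dependence does not work. You claim that after revealing $\mathcal C_s$, ``each walk $k$ not yet in $\mathcal C_s$ is a conditionally independent simple random walk from $B(n)$,'' and that this is achieved by ``revealing trajectories in batches indexed by the stage at which they first meet the current cluster.'' That is exactly backwards: the stage at which a walk first meets the cluster is a function of its trajectory, so knowing that walk $k$ has not yet joined conditions it to avoid $V_1,\dots,V_{s-1}$ up to time $2n^2$, which biases its law and in particular need not leave intact the lower bound of Lemma~\ref{l_ball} for hitting $V_s$ (whose ``useful'' new part may be close to the forbidden region). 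The paper avoids this by partitioning the walks into $\beta(d,h)$ groups $J^{(n)}_1,\dots,J^{(n)}_\beta$ \emph{deterministically in advance} (by index), using a fresh, so-far-unexamined group at each stage; this is the missing ingredient in your argument. Related conditioning hygiene that you omit: the paper restricts the construction to walks that do not return to $B(n)$ after time $3n^2$ (the set $J^{(n)}$, with the cost of this conditioning controlled by~\eqref{cond_behaviour}), precisely so that the later excursions appearing in~\eqref{connected_later_times} are decoupled from the cluster construction, and it controls the number of such excursions via the exponential tail of $|L_k|$.

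A second genuine error is the direction of the final hitting estimate. For~\eqref{connected_lower_level} you assert that Lemma~\ref{l_ball} shows ``every walk $k\le\heta^{(n)}_2$ enters $V_{\beta(d,h)}$ by its own time $n^2$ with probability $\ge 1-\se(n)$.'' It does not: for a \emph{single} walk, Lemma~\ref{l_ball} gives a hitting probability of order $|V_\beta|^{1-2/d}n^{-(d-2)}\approx n^{a_\beta+h-(d-2)}$, which is polynomially small, never $1-\se(n)$. The stretched-exponential bound comes from the reverse viewpoint: the $|K_\beta|\ge n^{d-3+\eps}$ final-generation walks are each continued for $n^2$ further steps after hitting $V_\beta$, and by Lemma~\ref{l_diam} each of these (conditionally independent) extensions hits the fixed connected target $R_k(n^2)$ of diameter $\ge n^{1-\eps}$ with probability $\ge cn^{-(d-3)-\eps'}$, so the product over the $\ge n^{d-3+\eps}$ trials is $\se(n)$; this is where $a_\beta>d-3$ is used. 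Finally, your side remark that the capacity of a union of walk traces is ``comparable to its cardinality in $d\ge3$'' is false (in $d=3$ a trace of length $n^2$ has capacity of order $n$, not $n^2$); what the argument actually relies on is the universal bound $q_x(A)\gtrsim |A|^{1-2/d}/\ell^{d-2}$ of Lemma~\ref{l_ball}, which is the true origin of the exponent $1-\tfrac2d$ in~\eqref{def_beta_d}.
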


\begin{remark}
  (a) The estimates in the above proposition only depend on
  the number of walks that we consider, they
  are uniform with respect to the choice of the starting positions.

  (b) Typically, when applying Proposition~\ref{p_connectedness} to the
  interlacement set (say, in the ball $B(n)$), the variables
  $\heta^{(n)}_{1}$, $\heta^{(n)}_{2}$ will be of order $n^{d-2}$, so
  that $h=0$. The proposition implies that the model of random
  interlacements is `far from the criticality' with respect to the
  connectedness of the interlacement cluster; we typically need much less
  than $\Theta(n^{d-2})$ walks to ensure that the interlacement set is `well
  connected'.

  (c) In the most important case $h=0$, it holds that
  $\beta(3,0)=1, \beta(4,0)=2, \beta(5,0)=3, \beta(6,0)=4$, but then
  $\beta(7,0)=6$. Comparing this with the results of \cite{RS12,PT11} (where it
    is proved that every two points in $\mathcal I^u$ can be joined by a
    path switching the trajectory at most $(\lceil d/2\rceil -1)$-times)
  indicates that the constants $\beta(d,h)$ are not optimal. The authors
  did not check if the formula~\eqref{def_beta_d} can be further
  simplified, but it is clear that $\beta(d,h)=\Theta(d\ln d)$ as $d\to\infty$.
  In any case, for our needs it is enough to know that $\beta(d,h)$ is
  finite for any $d\geq 3$ and $h<2/d$, and this fact is quite obvious.
\end{remark}

First, let us describe informally the idea of the proof
for the particular case $h=0$
(one may note that there are many similarities with the
  proof of Theorem~3.2 of~\cite{AMP02}, and with techniques used in
  \cite{RS12}).
Consider the random walk~$X^{(1)}$ and run it up to time~$n^2$.
Then, $\diam(R_1(n^2))$ is typically of order~$n$, so any other
random walk $X^{(k)}$ hits the set~$R_1(n^2)$ with probability
at least
of order roughly~$n^{-(d-3)}$ (with logarithmic correction for $d=3$)
by Lemma~\ref{l_diam}. Since there are $\Theta(n^{d-2})$ other
available walks, with high probability $R_1(n^2)$ will be hit
by $\Theta(n)$ \emph{different} other walks. In dimension~$d=3$, running
these $\Theta(n)$ walks
for~$n^2$ time units more after the respective hitting
moments of $R_1(n^2)$ is already enough to meet all the other trajectories
(again applying Lemma~\ref{l_diam}, one obtains that
  the probability that any other trajectory hits none of those walks
  is almost exponentially small in~$n$).
In dimension $d\ge 4$ this argument, however, just barely does not work.

So, what to do in dimension~$4$? Consider those $\Theta(n)$ trajectories
(of length~$n^2$)
that intersect the initial one. Together with the initial
trajectory, they form a connected set of cardinality roughly~$n^3$.
We then apply Lemma~\ref{l_ball} to obtain that a random walk starting
somewhere at the boundary of~$B(n)$ will hit such a set with
probability
at least of order $n^{-2}\times n^{3(1-\frac{2}{d})}$. Since
(recall that now $d=4$) we have $\Theta(n^2)$ walks in total,
typically $\Theta(n^{3(1-\frac{2}{d})})$ of them will hit that set.
Since in four dimensions Lemma~\ref{l_diam} gives lower bound of
order~$n^{-1}$ for the hitting probability of the initial piece
of length~$n^2$ of a generic trajectory and
$3(1-\frac{2}{4})=\frac{3}{2}>1$, running these $\Theta(n^{3/2})$
walks a bit more we meet all the other trajectories with high
probability (see on Figure~\ref{f_inters_RWs} an illustration of the
  proof for $d=4$).
\begin{figure}
  \centering \includegraphics{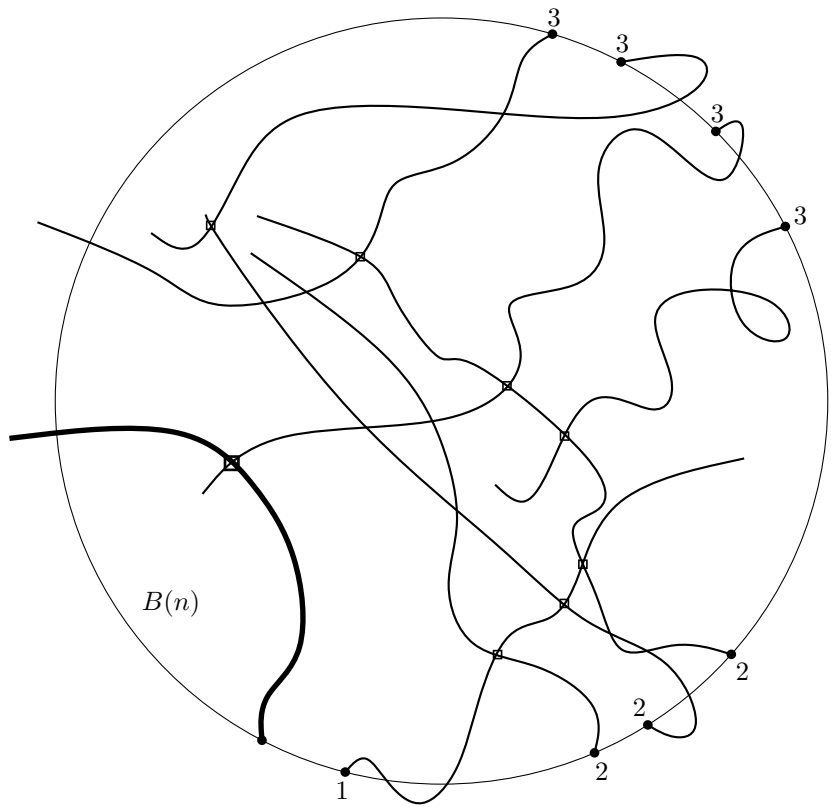} 
  \caption{On the proof
    of Proposition~\ref{p_connectedness} for $d=4$. One considers first
    the trajectory of some particle (labelled here by~`$1$') up to
    time~$n^2$. The trajectories of particles labelled by~`$2$' meet
    the first trajectory (the small boxes indicate the corresponding
      places of first hitting). The particles labelled by~`$3$' then
    hit the set formed by the trajectories with labels~`$1$' and~`$2$'.
    Continuing the trajectories of `$3$'-particles a bit more, one
    finds a `generic' trajectory (the thick one on this picture) with
    very high probability.}
  \label{f_inters_RWs}
\end{figure}

Again, in dimension~$d=5$ this fails since Lemma~\ref{l_diam}
now gives a lower bound of order $n^{-2}$. However, iterating the above
construction, we then obtain roughly
$\Theta(n^{(2+3(1-\frac{2}{d}))(1-\frac{2}{d})})$ independent walks,
and, since
$(2+3(1-\frac{2}{5}))(1-\frac{2}{5})=\frac{57}{25}>2$, these are enough to detect
all the other walks. For any fixed~$d$ one can perform enough iterations
to make this construction work.

If we recursively define the sequence
\begin{equation}
  \label{def_a_n}
  a_1^{(d)}=1, \qquad
  a_{n+1}^{(d)}=(a_n^{(d)}+2)\Big(1-\frac{2}{d}\Big),
\end{equation}
then the necessary number of iterations~$\beta(d,0)$ can be calculated
as follows:
\[
  \beta(d,0)=\min\{k: a_k^{(d)}>d-3\}.
\]
Since it is straightforward to obtain from the
recursion~\eqref{def_a_n} that
\[
  a_n^{(d)}=d-2-(d-3)\Big(1-\frac{2}{d}\Big)^{n-1},
\]
we see
that the above definition of $\beta(d,0)$ agrees
to~\eqref{def_beta_d}.

In order to make the above argument rigorous,
we have to address several issues, for example:
\begin{itemize}
  \item Deal with the dependence of the walks that participate
  in different stages of the above construction. This can be done
  by dividing the walks we use into~$\beta(d,h)$ groups and use one
  group on each stage.

  \item In fact, the trajectories can go back to the ball~$B(n)$
  at later epochs (i.e., much later then $n^2$).
  To prove~\eqref{connected_later_times},
  we have to assure that the random walks constructed on the
  $\beta(d,h)$th stage would meet these pieces of the trajectories
  too,
  otherwise we would have no good control on the distance within
  the interlacement cluster. So, we have to control the `total number
  of returns' (see~\eqref{control_Lm} below).
  In addition, in the above construction we shall use only the walks
  conditioned on not returning
  to~$B(n)$ after time~$3n^2$
  (in order not to be obliged to condition on a too much
    detailed future behaviour of the trajectory).

  \item Finally, all the events described in the informal construction
  should not only be `typical' in some sense, but hold with probability
  at least $1-\se(n)$.
  For that, we need to `adjust' (by sufficiently
    small amounts) the values in the power of~$n$ on each stage.
\end{itemize}

\begin{proof}[Proof of Proposition~\ref{p_connectedness}]
  We start with the formal proof of Proposition~\ref{p_connectedness}. To
  simplify the notation we write $\beta = \beta (d,h)$.
  Recall~\eqref{e:Jn} and define for $m=1,\ldots,\beta$
  \[
    J^{(n)}_{m} = \Big\{k\in J^{(n)}:
      \frac{(m-1)\heta^{(n)}_1}{\beta}
      \leq k <
      \frac{m \heta^{(n)}_1}{\beta}
      \Big\}.
  \]
  Since, clearly, there is a constant $c_4>0$ such that for
  all~$x\in B(n)$ we have
  \begin{equation}
    \label{quit_ball}
    P_x[X_m\notin B(n)\text{ for all }m\geq n^2] > c_4,
  \end{equation}
  we obtain that
  \begin{equation}
    \label{enough_part_psi_d}
    P\big[|J^{(n)}_m|
      \geq c_5n^{d-2-h}
      \text{ for all }m=1,\ldots,\beta\big]
    \geq 1-\se(n).
  \end{equation}
  Inequality~\eqref{quit_ball} further implies that
  that for every~$k, m\ge 1$
  \begin{equation}
    \label{control_Lm}
    \IP[|L_k|>m]\leq e^{-c_7m}.
  \end{equation}

  In the sequel, we will repeatedly use the following observation. For a
  simple random walk~$X$, let $X_{[0,2n^2]}$ be the piece of trajectory
  of the walk~$X$ up to time~$2n^2$. Then there is a constant~$c_8>0$
  such that for any event~$A$ which depends only on the initial piece of
  the trajectory of length~$2n^2$
  \begin{equation}
    \label{cond_behaviour}
    P[X_{[0,2n^2]}\in A \mid X_m\notin B(n) \text{ for all }m>3n^2]
    \geq c_8 P[X_{[0,2n^2]}\in A].
  \end{equation}
  Indeed, to prove~\eqref{cond_behaviour}, we write
  \begin{align*}
    \lefteqn{P[X_{[0,2n^2]}\in A \mid X_m\notin B(n) \text{ for all }m>3n^2] }\\
    &\geq P[X_{[0,2n^2]}\in A , X_m\notin B(n) \text{ for all }m>3n^2] \\
    &= P[X_{[0,2n^2]}\in A] P[X_m\notin B(n) \text{ for all }m>3n^2
      \mid X_{[0,2n^2]}\in A],
    \\&\ge P[X_{[0,2n^2]}\in A] \inf_{x\in \mathbb Z^d}
    P_x\Big[X_m\notin B(n) \text{ for all }m>n^2\Big]
  \end{align*}
  and use~\eqref{quit_ball} %\J{(with 1 instead of 3)}
  to argue that the last
  term is at least of constant order.

  As a last preparatory observation, note that,
  for any~$\eps>0$,
  by Lemma~\ref{l_range_SRW} and the observation following
  Definition~\ref{def_s.e.},
  \begin{equation}
    \label{diam>}
    P\bigg[\,
      \parbox{10cm}{for all $k\leq\heta^{(n)}_2, \diam(R_k(n^2))\geq
        n^{1-\eps}$, \\
        $\diam(\{X^{(k)}_{3mn^2},\ldots,X^{(k)}_{3(m+1)n^2-1}\})\geq n^{1-\eps}$
        for all $m\in L_k$}
      \bigg]
    \geq 1-\se(n).
  \end{equation}

  Let $i_1=\min\{j:j\in J^{(n)}_1\}$, and define
  \[
    V_1 = R_{i_1}\big(n^2\wedge T^{i_1}_{B(2n)}\big).
  \]
   For any $j$ we obtain using Lemma~\ref{l_diam},
  and~\eqref{cond_behaviour} with $A=\{R_j(n^2)\cap V_1\neq\emptyset\}$,
  \begin{equation}
    \label{hit_V1}
    P[R_j(n^2)\cap V_1\neq \emptyset \mid j\in J^{(n)}_1] \geq
    \begin{cases}
      \displaystyle
      \frac{c_9\diam(V_1)}{n\ln n},\qquad & d=3,\\[0.5cm]
      \displaystyle\frac{c_9\diam(V_1)}{n^{d-2}}, & d\geq 4.
    \end{cases}
  \end{equation}
  We introduce the set of indices
  $K_1=\big\{j\in J^{(n)}_1\setminus\{i_1\}:
   R_j(n^2)\cap V_1\neq \emptyset\big\}$.
  By~\eqref{enough_part_psi_d}, 
%\eqref{cond_behaviour}
%\J{we do not need \eqref{cond_behaviour} here?}
  \eqref{diam>}, and~\eqref{hit_V1}, using the independence the random
  walks $X^{(j)}$,
  it holds that
  \begin{equation*}
    P[|K_1|\geq n^{1-h-\eps_1}]
    \geq 1-\se(n),
  \end{equation*}
  where $\eps_1:=2\eps$ ($\eps$ is supposed to be sufficiently
    small so that $1-h-\eps_1>0$).

  For $d=3$, everything is ready to finish the proof
  of Proposition~\ref{p_connectedness}, but for other values of~$d$ we
  first need to describe a general step of the construction
  (recall that~$\beta$ steps are necessary).
  Define recursively (recall~\eqref{def_a_n})
  \begin{equation}
  \label{recursion_a_n}
    \begin{split}
      a_1 &= a_1(d,h)=1-h, \\
      a_{n+1} &= a_{n+1}(d,h)=(a_n(d,h)+2)\Big(1-\frac{2}{d}\Big)-h.
    \end{split}
  \end{equation}
  From the above recursion it is straightforward to obtain
  that
  \[
    a_n = d-2-\frac{dh}{2}-\Big(d-3+h-\frac{dh}{2}\Big)
    \Big(1-\frac{2}{d}\Big)^{n-1}.
  \]
  So, with $\beta$ defined by~\eqref{def_beta_d},
  it holds that $a_{\beta}>d-3$.

  Assume that for some $1\leq m\leq \beta-1$ we have constructed
  the connected sets~$V_m\subset\Z^d$ and also
  the sets $K_m\subset J^{(n)}_m$ of indices of the walks which
  hit~$V_m$ before time~$n^2$,
  such that with probability at least $1-\se(n)$
  \begin{equation}
    \label{size_Km}
    |K_m|\geq n^{a_m-\eps_m}.
  \end{equation}
  Then, define
  \[
    V_{m+1}
    = V_m\cup\Big(\bigcup_{j\in
        K_m}R_j\big(2n^2\wedge T^j_{B(2n)}\big) \Big).
  \]
  By Lemma~\ref{l_range_many} (observe that, by~\eqref{cond_behaviour},
its proof still goes through in this situation) 
and~\eqref{size_Km}
%\J{with which $A$?}
  it holds that
  \begin{equation}
    \label{size_Vm}
    P\big[|V_{m+1}|\geq n^{2+a_m-2\eps_m}\big]
    \geq 1-\se(n).
  \end{equation}
  Define
  $K_{m+1}=\{j\in J^{(n)}_{m+1}: R_j(n^2)\cap V_m\neq \emptyset\}$.
  Observe that, by Lemma~\ref{l_ball} and~\eqref{cond_behaviour} with
  $A=\{R(n^2)\cap V_{m+1}\neq \emptyset\}$,
  for any $j\in J^{(n)}_{m+1}$
  \begin{equation}
    \label{hit_Vm+1}
    P[R_j(n^2)\cap V_{m+1}\neq \emptyset]
    \geq \frac{c|V_{m+1}|^{1-\frac{2}{d}}}{n^{d-2}}.
  \end{equation}
  So, using~\eqref{enough_part_psi_d}, 
%\eqref{cond_behaviour},
% \J{we do not need \eqref{cond_behaviour} here?}
  \eqref{diam>}, \eqref{size_Vm}, and~\eqref{hit_Vm+1},
  we obtain
  \begin{equation*}
    P\big[|K_{m+1}|\geq
      n^{(2+a_m-2\eps_m)(1-\frac{2}{d})-h-\eps_m}\big]
    \geq 1- \se(n),
  \end{equation*}
  and (for the next induction step) denote
  $\eps_{m+1}=a_{m+1}-(2+a_m-2\eps_m)(1-\frac{2}{d})+\eps_m$,
  so that~\eqref{size_Km} would hold with $m+1$ instead of~$m$.

  Now we describe the last step needed for the proof
  of~\eqref{n2connected}, \eqref{connected_lower_level},
  and~\eqref{connected_later_times}. Assume that on the initial
  step the parameter~$\eps$ was chosen to be so small that
  $a_{\beta}-\eps_{\beta}>d-3+\eps$.
  Consider the walks with indices in~$K_{\beta}$; after hitting
  $V_{\beta}$ the rest of the trajectory is conditionally
  independent from the initial part, so
  % , denoting by $\tau_j$
  %  the hitting time of~$V_{\beta_d^h}$ by $j$th random walk,
  Lemma~\ref{l_diam} and~\eqref{cond_behaviour} imply that,
  for $j\in K_{\beta}$
  \[
    P\Big[\big\{X^{(j)}_{H^j_{V_{\beta}}+1},
        \ldots,X^{(j)}_{H^j_{V_{\beta}}+n^2}\big\}
      \cap A\neq \emptyset\Big] \geq
    \begin{cases}
      \displaystyle
      \frac{cn^{-\eps}}{\ln n}, & d=3,\\[0.5cm]
      \displaystyle\frac{cn^{-\eps}}{n^{d-3}}, & d\geq 4,
    \end{cases}
  \]
  for any connected set
  $A\subset B(2n)$ such that $\diam(A)\geq n^{1-\eps}$.
  Using this together with~\eqref{control_Lm} and~\eqref{diam>},
  we conclude the proof of Proposition~\ref{p_connectedness}.
\end{proof}

\section{Proof of Theorem~\ref{t_connected}} %<<<1
\label{s_connected}

Using Proposition~\ref{p_connectedness}, it is straightforward to show
Theorem~\ref{t_connected}.
Denote by $\eta^{(n)}_u$ the number
of trajectories at level $u$ entering $B(n)$, that is the number of
trajectories in the support $\mu^u_{B(n)}$ (recall \eqref{e:muKu} for
  the notation). By the definition of random interlacement,
$\eta^{(n)}_u$ has Poisson distribution
with parameter $u\capa(B(n)) = \Theta(un^{d-2})$. 
%$u\capa(B(n))\asymp n^{d-2}$. 
Therefore,
using e.g.\ Chernoff bounds we obtain for small enough~$c_1$ 
and large enough $c_2$ that
\begin{equation}
  \label{est_num_eta}
  \IP[c_1u n^{d-2}<\eta^{(n)}_u < c_2un^{d-2}]
  \geq 1-\se(n).
\end{equation}

Assume for the moment that
\begin{equation}
  \label{int_absurd}
  \IP[\text{there exists $u>0$ such that $\I^u$ is not connected}]>0.
\end{equation}
Then, one can find $\eps>0$ and $0<u<{\hat u}<\infty$ such that
\[
  \IP[\text{there exists $u'\in[u,{\hat u}]$
      such that $\I^{u'}$ is not connected}] \geq\eps,
\]
and so, denoting by
$\I^u_{(n)} := \bigcup_{i:w_i\in \supp \mu^u_{B(n)}}\Ran w_i$ the interlacement set generated
by the trajectories that intersect~$B(n)$, we have
\[
  \liminf_{n\to\infty} \IP[\text{there exists $u'\in[u,{\hat u}]$
      such that $\I^{u'}_{(n)}$ is not connected}] \geq\eps.
\]
This, however, contradicts Proposition~\ref{p_connectedness}:
putting
%$G_n=B(n)$,
$\heta^{(n)}_1=\eta^{(n)}_u$,
$\heta^{(n)}_2=\eta^{(n)}_{{\hat u}}$,
and using~\eqref{est_num_eta}, we see that,
if both events in the left-hand sides of~\eqref{n2connected}
and~\eqref{connected_lower_level} occur, then $\I^{u'}_{(n)}$
should be connected for all $u'\in[u,{\hat u}]$;
on the other hand, the probability of these events
approaches~$1$ as $n\to\infty$.
So,~\eqref{int_absurd} cannot be true.
\qed

\section{Large deviations for the internal distance} %<<<1
\label{s:ap}

In this section we prove Theorem~\ref{t_ap}. To this end we fix
$a\in (0,1/3)$ and
investigate the properties of  $\mathcal I^u$ when restricted to
\[
  G^{(n)}_a = \bigcup_{k=0}^n B(k\e_1,n^a).
\]
In words, $G^{(n)}_a$ is the $n^a$-neighbourhood of the segment
between the origin and $n\e_1$
(recall that $B(x,r)$ denotes the ball in the $\|\cdot\|_\infty$-distance).

First,
we need the following elementary estimate on $e_{G_a^{(n)}}(A)$.
\begin{lemma}
  \label{l:escape_sausage}
  Let $F_k$ be the hyperplane $\{x\in\Z^d:x\cdot \e_1 =k\}$. Then, for
  any $k\in \{-\lf n^a \rf + 1, \dots, \lf n + n^a\rf-1\}$, it holds that
  \begin{equation*}
    e_{G^{(n)}_a}(F_k)=
    \sum_{x\in \partial G^{(n)}_a\cap F_k}
    P_x[\tilde H_{G^{(n)}_a}=\infty] \geq
    \begin{cases}
      \displaystyle
      \frac{cn^{a(d-3)}}{\ln n}, & d=3,\\[0.3cm]
      \displaystyle
      {c}{n^{a(d-3)}}, & d\geq 4.
    \end{cases}
  \end{equation*}
\end{lemma}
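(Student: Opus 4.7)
The plan is to lower-bound $P_x[\tilde H_{G^{(n)}_a}=\infty]$ for each ``lateral'' boundary point $x\in F_k\cap\partial G^{(n)}_a$ separately and then sum. A direct check of the definition of $G^{(n)}_a$ shows that, for every $k$ in the stated range, $F_k\cap\partial G^{(n)}_a$ is exactly the lateral ring $\{k\e_1+y_\perp:y_\perp\in\Z^{d-1},\|y_\perp\|_\infty=n^a\}$, so that its cardinality is at least $c n^{a(d-2)}$. It will therefore suffice to establish the pointwise estimate
\[
P_x\bigl[\tilde H_{G^{(n)}_a}=\infty\bigr]\geq
\begin{cases} c/(n^a\log n), & d=3,\\ c/n^a, & d\geq 4.\end{cases}
\]
Taking the first step (with probability $\geq 1/(2d)$) from $x$ to an off-tube neighbour $y=x+\e_j$ for some $j\geq 2$, the problem then reduces to a lower bound on $P_y[H_{G^{(n)}_a}=\infty]$.

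The key observation I plan to use is that, writing $\pi\colon\Z^d\to\Z^{d-1}$ for the projection that forgets the first coordinate, one has $G^{(n)}_a\subset\pi^{-1}(B_{d-1}(0,n^a))$, where $B_{d-1}(0,r):=\{z\in\Z^{d-1}:\|z\|_\infty\leq r\}$. Consequently, if the projected lazy walk $\pi(X_{\cdot})$ avoids $B_{d-1}(0,n^a)$ forever, then the original walker avoids $G^{(n)}_a$ forever; and the projected walk starts at $\ell_\infty$-distance $1$ from $B_{d-1}(0,n^a)$.

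For $d\geq 4$ the projection is transient (since $d-1\geq 3$), and the standard transient gambler's ruin (applied to the harmonic function $V(\rho)=\rho^{-(d-3)}$ on the annulus $n^a\leq\|\cdot\|_\infty\leq R$ and letting $R\to\infty$) will give that the projected walk avoids $B_{d-1}(0,n^a)$ forever with probability at least $c/n^a$; this finishes the $d\geq 4$ case in a single step. For $d=3$ the projection is a recurrent 2-dimensional lazy walk and this direct argument returns zero, so I will split the escape into two stages. First, by the 2D logarithmic gambler's ruin applied to the harmonic function $\log\|\cdot\|_\infty$ on the annulus $n^a\leq\|\cdot\|_\infty\leq 2n$, the projected walk will reach $\ell_\infty$-norm $2n$ before returning to $B_2(0,n^a)$ with probability of order
\[
\frac{\log(1+n^{-a})}{\log(2n^{1-a})}\asymp\frac{1}{n^a\log n}.
\]
On this event the walker is at $\ell_\infty$-distance at least $n$ from the whole tube; I then plan to apply Lemma~\ref{l_hit_Green} in the form $q_z(G^{(n)}_a)\leq\capa(G^{(n)}_a)\cdot\max_{y\in G^{(n)}_a}g(z,y)$ together with the classical upper bound $\capa(G^{(n)}_a)\leq Cn/\log n$ for a thin cylinder in $\Z^3$ (which I will derive by testing the Dirichlet form against a function equal to $1$ on $G^{(n)}_a$ and decaying logarithmically in the perpendicular distance to the tube axis) to conclude that the probability of ever returning to $G^{(n)}_a$ from such a position is at most $C/\log n\leq 1/2$ for $n$ large. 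Multiplying the two factors will yield the claimed $c/(n^a\log n)$ bound.

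The hard part will be the $d=3$ case: the recurrence of the perpendicular projection forces the two-step argument above, and it relies on the sharp logarithmic upper bound for the capacity of a thin cylinder in $\Z^3$ and a short but somewhat delicate Dirichlet-form computation. For $d\geq 4$ everything reduces cleanly to a single gambler's ruin in dimension $d-1$, thanks to the transience of the $(d-1)$-dimensional projection.
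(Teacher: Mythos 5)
Your argument is correct, but it proceeds quite differently from the paper's. The paper adapts the last--exit decomposition of \cite[Proposition~2.4.5]{Law91}: it embeds $G^{(n)}_a$ in a slightly larger box $\tilde G_n$, writes $c\le P_0[H_W=\infty]\le\sum_{z}g(0,z)P_z[\tilde H_{\tilde G_n}=\infty]$, and, using translation monotonicity of the cross-sectional equilibrium mass together with $\sum_{\ell}\sup_{z\in\partial\tilde G_n\cap F_\ell}g(0,z)\asymp n^{a(3-d)}$ (times $\ln n$ when $d=3$), bounds the \emph{sum} $e_{G^{(n)}_a}(F_k)$ in one stroke, uniformly in $d\ge 3$, without ever estimating a single escape probability. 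You instead prove the pointwise bound $P_x[\tilde H_{G^{(n)}_a}=\infty]\gtrsim n^{-a}$ (resp.\ $n^{-a}/\ln n$ for $d=3$) for each of the $\asymp n^{a(d-2)}$ lateral boundary points of the cross-section, by projecting onto the perpendicular coordinates: transient gambler's ruin in $\Z^{d-1}$ for $d\ge4$, and for $d=3$ a two-stage argument (logarithmic ruin out to radius $2n$, then a return-probability bound $\le C/\ln n$ from distance $n$). All the steps check out: the exponents recombine correctly ($n^{a(d-2)}\cdot n^{-a}=n^{a(d-3)}$), the projection inclusion $G^{(n)}_a\subset\pi^{-1}(B_{d-1}(0,n^a))$ is valid, and the return estimate holds. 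One simplification worth noting: the step you flag as delicate --- the capacity bound $\capa(G^{(n)}_a)\le Cn/\ln n$ via a Dirichlet-form test function --- is not needed; Lemma~\ref{l_hit_Green} with $g(z,G^{(n)}_a)\le C|G^{(n)}_a|/n\asymp n^{2a}$ and $\min_{y\in G^{(n)}_a}g(y,G^{(n)}_a)\gtrsim n^{2a}\ln n$ already gives $q_z(G^{(n)}_a)\le C/\ln n$ from distance $n$. Your route costs a dimension split and the extra $d=3$ stage, but buys genuinely stronger information, namely the individual escape probabilities of the lateral boundary points, which the paper's averaged argument does not provide.
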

\begin{proof}
  We adapt the proof of Proposition~2.4.5 of~\cite{Law91}.  Let
  $\tilde G_{n}=([-4n,4n]\times[-n^a,n^a]^{d-1})\cap \mathbb Z^d$.
  As $\tilde G_n -\ell \e_1  \supset G_a^{(n)}-k \e_1$ for any $k$ as in the
  statement and $\ell\in \{-2n,\dots,2n\}$,
  \begin{equation}
    \label{e:fga}
    \sum_{z\in \partial G^{(n)}_a\cap F_k}
    P_z[\tilde H_{G^{(n)}_a}=\infty]\ge
    \sum_{z\in \partial \tilde G_n\cap F_\ell}
    P_z[\tilde H_{\tilde G_n} = \infty].
  \end{equation}
  Let $W=\tilde G_n\cap \{x:|x\cdot \e_1|> 2n\}$. It is elementary to see that
  $P_0[H_W=\infty]\ge c$.
  Inspecting the proof of \cite[Proposition 2.4.1(c)]{Law91}, denoting by
  $L$ the last time the random walk visits $\tilde G_n$, we get
  \begin{equation*}
    \begin{split}
      c&\le P_0[H_W=\infty] \\ &\le P_0[|X_L\cdot \e_1|\le  2n]
      \\&=
      \sum_{z\in\partial \tilde G_n,|z\cdot \e_1|\le 2n}g(0,z)
      P_z[\tilde H_{\tilde G_n}=\infty]
      \\&\le \sum_{\ell=-2n}^{2n}\sup_{z\in \partial \tilde G_n\cap F_\ell}
      g(0,z) \sum_{z\in \partial \tilde G_n\cap F_\ell}
      P_z[\tilde H_{\tilde G_n}=\infty] .
    \end{split}
  \end{equation*}
  Using \eqref{e:fga}, we then get
  \begin{equation*}
    c\le
    \sum_{z\in \partial G_a^{(n)}\cap F_k}
    P_z[\tilde H_{\tilde G_n}=\infty]
    \sum_{\ell=-2n}^{2n}\sup_{z\in \partial \tilde G_n\cap F_\ell} g(0,z) .
  \end{equation*}
  The lemma then follows using the elementary asymptotics
  $g(0,z)\asymp \|z\|^{2-d}$
  (recall~\eqref{Green_asymp>}--\eqref{Green_asymp<}).
\end{proof}

Let $\eta_n=\eta_n(u,a)$ be the number of trajectories of
$\mu^u_{G^{(n)}_a}$ (recall~\eqref{e:muKu}). As before, we enumerate the
corresponding random walks as $X^{(1)},\ldots, X^{(\eta_n)}$, denote
their starting positions by $x^{(1)},\dots, x^{(\eta_n)}$, and let
$R_k(m)$ be the set of different sites visited by $k$th random walk by
time~$m$.

Let us define
\begin{equation}
  \label{def_Uj}
  U_k = B(\lf kn^a\rf\e_1,n^a) =
  \{y\in G^{(n)}_a : |y\cdot\e_1-\lf kn^a\rf|<n^a\},
  \qquad k=0,\dots, n.
\end{equation}
Due to Lemma~\ref{l:escape_sausage}, for any $k\in \{0,\dots,n\}$,
\begin{equation*}
  e_{G_a^{(n)}}(U_k) \ge f_d(n) :=
  \begin{cases}
    \displaystyle
    \frac{cn^{a(d-2)}}{\ln n}, & d=3,\\[0.3cm]
    \displaystyle
    {c}{n^{a(d-2)}}, & d\geq 4.
  \end{cases}
\end{equation*}

Let $\eta_{n,k}=|\{i\le \eta_n:X^{(i)}_0\in U_k \}|$ be the number
of walks starting in $U_k$.
Using the large deviation properties of the Poisson distribution,
as in~\eqref{est_num_eta}, we obtain
\begin{equation}
  \label{e:entracesaussage}
  \mathbb P\big[c_1f_d(n)\le \eta_{n,k} \le
    c_2f_d(n)\big]\ge 1-\se(n).
\end{equation}

We now fix a small positive constant $\eps>0$, and define for
$1\le k\le \eta_n$
\begin{equation*}
  \begin{split}
    j_k&=\inf\{j\ge 0:
      X^{(k)}_{n^{2(a+\varepsilon )}+(j-1) n^{2a}+i} \notin G_a^{(n)},
      \forall i=0,\dots, n^{2a}\},\\
    \hat t_k &= n^{2(a+\varepsilon )}+j_k n^{2a},\\
    \tilde t_k &= \inf\{j\ge 0: \diam (R_k(j))\ge 2 n^{2(a+\varepsilon) }\}
    \wedge \hat t_k.
  \end{split}
\end{equation*}
Denote by $\hat \I$ (respectively, $\tilde \I$) the `interlacement' set
formed only by the initial pieces of length~$\hat t_k$ (respectively,
  $\tilde t_k$) of the trajectories $(X^{(k)}, k=1,\ldots,\eta_n)$:
\[
  \hat{\I}  = \bigcup_{k=1}^{\eta_n} R_k(\hat t_k),\qquad
  \tilde{\I}  = \bigcup_{k=1}^{\eta_n} R_k(\tilde t_k).
\]
Observe that $\I^u\supset \hat \I\supset \tilde \I$. Further, by the central limit
theorem, for any $x\in G_a^{(n)}$, $P_x[T_{G_a^{(n)}}\le n^{2a}]\ge c$.
Therefore, using the strong Markov property recursively on the definition
of $j_k$,
\begin{equation*}
  \mathbb P[j_k\ge n^\varepsilon ]\le \se (n).
\end{equation*}
When $j_k\le n^\varepsilon $, then
$\diam(R_k(\hat t_k))\le 2n^{2(a+\varepsilon )}$. Therefore,
\begin{equation}
  \label{e:hatItildeI}
  \mathbb P[\hat \I \neq \tilde \I]\le \se(n).
\end{equation}

Heuristically, the set $\hat \I$, is `well suited' for application of
Proposition~\ref{p_connectedness} as it has no `dangling ends' in
$G_a^{(n)}$. By this we mean that knowing that
$X^{(k)}$ is in $G_a^{(n)}$ at some time $j$, its next $n^{2a}$ steps will
be contained in $\hat \I$:
\begin{equation*}
  \{X^{(k)}_j\in G_a^{(n)}, j\le \hat t_k\}   \implies \{\hat t_k \ge j+n^{2a}\}.
\end{equation*}
On the other hand, the trajectories in $\tilde \I$ are `short range', which
will introduce some independence later.

We now introduce a notation that will be useful many times,
see Figure~\ref{f:psis} for its illustration.
For $x\in\Z^d\setminus\{0\}$ and $y\in\Z^d$, we define
\begin{align*}
  \zeta^{(x)}_0(y) &= \max\{m\leq 0 : mx+y\in\I^u\},\\
  \zeta^{(x)}_{k+1}(y) &= \min\{m>\zeta^{(x)}_k(y) : mx+y\in\I^u\},
  \quad k\geq 0.
\end{align*}
We set
$\psi^{(x)}_k(y):=y+\big(\zeta^{(x)}_k(y)\big)x$ to be the
site on~$\I^u$ corresponding to~$\zeta^{(x)}_k(y)$.
When $y=0$ and/or $x=\e_1$, we omit them from the notation, that is
e.g.~$\zeta_k:=\zeta^{(\e_1)}_k(0)$.

\begin{figure}[ht]
  \centering
  \includegraphics{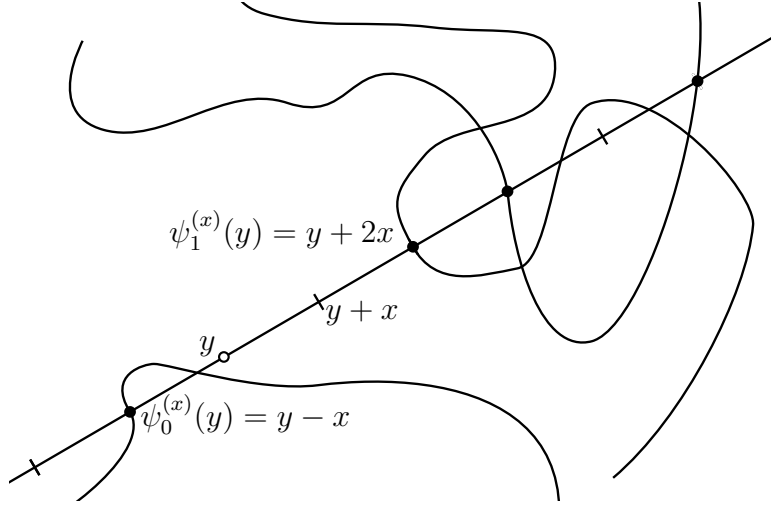}
  \caption{Construction of $\zeta^{(x)}_i(y)$ and $\psi^{(x)}_i(y)$.
    Here ${\zeta^{(x)}_0(y)=-1}$, $\zeta^{(x)}_1(y)=2$,
    $\zeta^{(x)}_2(y)=3$ and $\zeta^{(x)}_3(y)=5$.}
  \label{f:psis}
\end{figure}

As the first step of the proof of Theorem~\ref{t_ap}, we control the
distance between the left- and right-most intersection of $\mathcal I^u$
with the segment $(0,n\e_1]$. More precisely, we want to show that
\begin{equation}
  \label{e:rho_onedir}
  \IP[\rho_u(\psi_1, \psi_0(n\e_1))\geq cn]\le \se(n)
\end{equation}
for a $c$ large. (Observe that $\IP^u_0[\psi_0=0]=1$.)

We write $\hat\rho(x,y)$ for the internal distance of $x$, $y$ on
$\hat \I$ and define $\hpsi_i^{(x)}(y)$ similarly as $\psi_i^{(x)}(y)$,
using $\hat\I$ instead of $\I^u$.  Observe that $\hpsi_i^{(x)}(y)$ depends
on $n$. It may happen that
$\hpsi_1 = \psi_1$, $\hpsi_0(n\e_1)=\psi_0(n\e_1)$, but that is not
certain. In any case,
\begin{equation}
  \label{e:triangle}
  \rho_u(\psi_1, \psi_0(n\e_1)) \le
  \rho_u(\psi_1,\hpsi_1) + \hat\rho(\hpsi_1,\hpsi_0(n\e_1)) +
  \rho_u(\hpsi_0(n\e_1), \psi_0(n\e_1)).
\end{equation}

To bound the right-hand side,
we need few lemmas.
\begin{lemma}
  \label{l:dist_exp_tail}
  Let $g_d(k)=e^{-c_4k}$ when $d\ge 4$, and $g_3(k)=e^{-c_4k/\ln k}$.
  Then, for every $a\in (0,1/3)$, $\varepsilon >0$,
  $x\in \mathbb Z^d\setminus\{0\}$, $y\in\mathbb Z^d$,
  \begin{equation}
    \label{e:inbox}
    \IP\big[\|\psi^{(x)}_1(y)\|_\infty\ge k\big]\le g_d(k),\\
  \end{equation}
  Further, for $y$ such that $B(y,n^{a/2})\subset G_a^{(n)}$,
  \begin{equation*}
    \IP\big[\|\hpsi_1^{(\e_1)}(y)\|_\infty\ge k\big]\le g_d(k)+\se(n).
  \end{equation*}
\end{lemma}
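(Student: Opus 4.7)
The plan is to reduce both inequalities to a lower bound on the capacity of a collinear set of $k$ points, after which the first inequality is immediate from the Laplace functional of the interlacement, and the second inequality follows by showing that $\hat\I$ and $\I^u$ typically agree on a ball around $y$.

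For the first inequality, the event $\{\|\psi^{(x)}_1(y)\|_\infty\ge k\}$ forces the collinear set
\[
A_k=\{y+jx:1\le j\le k'\},\qquad k'=\big\lfloor k/\|x\|_\infty\big\rfloor,
\]
(or an appropriate translate of it, depending on the sign of $\zeta^{(x)}_0(y)$) to be disjoint from $\I^u$. By the Poisson construction, $\mu^u_{A_k}(W)$ is Poisson with mean $u\capa(A_k)$, so $\IP[A_k\cap\I^u=\emptyset]=e^{-u\capa(A_k)}$. I would estimate $\capa(A_k)$ from below through the variational formula $\capa(A)^{-1}=\inf_\mu\sum_{z,z'\in A}g(z,z')\mu(z)\mu(z')$, taking $\mu$ uniform on $A_k$. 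Invoking \eqref{Green_asymp<} and pairing terms by $m=|i-j|$ gives
\[
\capa(A_k)^{-1}\le \frac{C}{k'}\sum_{m=0}^{k'}\frac{1}{1+(m\|x\|_\infty)^{d-2}},
\]
which is $O(1/k')$ for $d\ge 4$ (convergent series) and $O(\ln k'/k')$ for $d=3$. Hence $\capa(A_k)\ge ck$ and $\capa(A_k)\ge ck/\ln k$ respectively, and substituting back yields the tails $e^{-c'k}$ and $e^{-c'k/\ln k}$ claimed in $g_d(k)$.

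For the second inequality, the inclusion $\hat\I\subset\I^u$ goes the wrong way and the bound cannot be inferred directly. The plan is to establish that, under the hypothesis $B(y,n^{a/2})\subset G_a^{(n)}$, the `good' event
\[
\mathcal G_n:=\big\{\hat\I\cap B(y,n^{a/2})=\I^u\cap B(y,n^{a/2})\big\}
\]
has probability $1-\se(n)$, after which the first part applies verbatim to $A_k\subset B(y,n^{a/2})$ whenever $k\le n^{a/2}$. For $k>n^{a/2}$ one uses the monotonicity $\IP[\|\hpsi^{(\e_1)}_1(y)\|_\infty\ge k]\le \IP[\|\hpsi^{(\e_1)}_1(y)\|_\infty\ge \lfloor n^{a/2}\rfloor]$ together with the observation that $g_d(\lfloor n^{a/2}\rfloor)=\se(n)$, which absorbs everything into the $\se(n)$ term. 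To verify $\IP[\mathcal G_n^c]\le\se(n)$, note that any $z\in B(y,n^{a/2})\cap(\I^u\setminus\hat\I)$ must be visited by some trajectory $X^{(\ell)}$ only at times $>\hat t_\ell$, which by the definition of $\hat t_\ell$ forces $X^{(\ell)}$ to return to $G_a^{(n)}$ after having spent $n^{2a}$ consecutive steps outside it. I would combine the strong Markov property at these potential return epochs with the hitting-probability bound
$\IP_z[H_{B(y,n^{a/2})}<\infty]\le C\,\capa(B(y,n^{a/2}))/\dist(z,y)^{d-2}=O(n^{-a(d-2)/2})$
for $z$ at distance $\Theta(n^a)$ from $G_a^{(n)}$, and a \eqref{control_Lm}-type stretched-exponential bound on the number of such returns; a union bound over the $\eta_n\le n^M$ trajectories (from \eqref{e:entracesaussage}) then yields the required $\se(n)$ rate.

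The main technical obstacle is the last step: controlling the return probability of a single trajectory to the small ball $B(y,n^{a/2})$ inside the elongated tube $G_a^{(n)}$. A trajectory that has locally left the neighbourhood of $y$ can still revisit $B(y,n^{a/2})$ via the long axis of $G_a^{(n)}$, so one cannot dismiss returns purely on spatial grounds; the argument must rely on iterated applications of the Markov property at the times $\{n^{2(a+\varepsilon)}+jn^{2a}\}_j$ where the trajectory starts a fresh `away' interval, combined with the capacity-based hitting estimate for the small target ball, to push the tail down to $\se(n)$ uniformly in $y$.
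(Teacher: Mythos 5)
Your treatment of the first inequality is correct and is essentially the paper's argument: the event forces a collinear set of $\Theta(k)$ points to avoid $\I^u$, and one combines $\IP[A\cap\I^u=\emptyset]=e^{-u\capa(A)}$ with the capacity of a segment, $\Theta(k)$ for $d\ge4$ and $\Theta(k/\ln k)$ for $d=3$. The paper cites Lawler for the capacity estimate where you rederive it from the variational (energy) characterisation; that is a harmless difference.

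The second inequality is where the proposal breaks down, and the obstacle you flag at the end is not merely technical but fatal to the strategy. The event $\mathcal G_n=\{\hat\I\cap B(y,n^{a/2})=\I^u\cap B(y,n^{a/2})\}$ does \emph{not} have probability $1-\se(n)$; its complement has at least polynomially small probability. To see this, consider the single site $y$. With probability bounded below by a negative power of $n$ (not by $\se(n)$), exactly one trajectory of $\mu^u_{G_a^{(n)}}$ visits $y$ and that trajectory enters $G_a^{(n)}$ at distance at least $n^{a+2\varepsilon}$ from $y$: the equilibrium entrance point of a trajectory conditioned to hit $y$ lies that far away with probability decaying only polynomially in $n$. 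Conditionally on this, standard heat-kernel bounds show that the walk, conditioned to hit $y$ from distance $r\ge n^{a+2\varepsilon}$, does so before time $2n^{2(a+\varepsilon)}\ge\hat t_\ell$ only with probability $\se(n)$; hence on an event of polynomial probability $y\in\I^u\setminus\hat\I$, and $\IP[\mathcal G_n^c]$ cannot be $\se(n)$. (In fact, summing the analogous single-site estimates over the $\asymp n^{ad/2}$ sites of the ball shows that for small $\varepsilon$ the expected number of sites of $B(y,n^{a/2})$ in $\I^u\setminus\hat\I$ diverges, so $\IP[\mathcal G_n^c]\to1$.) The same issue defeats the union-bound route you sketch: your own single-trajectory return estimate is only $O(n^{-a(d-2)/2})$ — and is in fact not even uniformly small, since the hypothesis $B(y,n^{a/2})\subset G_a^{(n)}$ allows the truncation point $X^{(\ell)}_{\hat t_\ell}\notin G_a^{(n)}$ to be adjacent to $B(y,n^{a/2})$ — while there are polynomially many trajectories; iterating the Markov property over the `away' intervals cannot convert a polynomially small first-return probability into an $\se(n)$ bound.

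The paper avoids the comparison of $\hat\I$ with $\I^u$ altogether and argues in the opposite direction: it shows that with probability $1-g_d(k)-\se(n)$ \emph{some} trajectory hits the segment $S_k=\{y+j\e_1:0\le j\le k\}$ within its first $n^{2(a+\varepsilon)}$ steps, which is always at most $\hat t_i$, so the hit produces a point of $\hat\I$ on the segment. Concretely, by \eqref{e:entracesaussage} at least $c_1f_d(n)$ walks start in each slab $U_j$, $j\le n^{\varepsilon}$; each such walk hits $S_k$ within time $n^{2(a+\varepsilon)}$ with probability at least $ck/(((j+1)n^a)^{d-2}\ln k)$ by Lemma~\ref{l_diam}; multiplying the independent failure probabilities over $j$ yields $e^{-c_4k/\ln k}$ for $d=3$ and $e^{-c_4k}$ for $d\ge4$, up to an additive $\se(n)$ from the event that the walk counts are atypical. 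You would need to replace your reduction by an argument of this ``positive'' type.
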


\begin{proof}
  Let $S_k=\{y+jx:0\le j\le k\}$. The first claim follows directly from
  the definition of~$\I^u$ (observe that for any $A\subset\Z^d$
it holds that $\IP^u[A\cap \I^u = \emptyset]=e^{-u\capa(A)}$)
and the simple estimate on the capacity of 
the `segment' $S_k$ (see e.g.~\cite{Law91}, Proposition~2.4.5)
  \begin{equation*}
    \capa S_k =
%\asymp
    \begin{cases}
      \Theta(k/\ln k), \quad & d=3,\\
      \Theta(k), &d\ge 4.
    \end{cases}
  \end{equation*}

  For the second statement, we assume without loss of generality that
  $y=0$, and define
  $A_n=\{0\le k \le n^\varepsilon, k \text{ even}\}$.
  For every $j\in A_n$,
  and $x\in U_j$, by Lemma~\ref{l_diam},
  \begin{equation*}
    q_x(S_k;n^{2(a +\varepsilon) }) \ge
    \begin{cases}
      \displaystyle
      \frac{ck}{((j+1)n^a)^{d-2}\ln k}, & d=3,\\[0.3cm]
      \displaystyle
      \frac{ck}{((j+1)n^a)^{d-2}}, & d\ge4.
    \end{cases}
  \end{equation*}
  Combining this estimate with \eqref{e:entracesaussage}, using
  $\hat t_k\ge n^{2(a+\varepsilon )}$, we obtain in
  $d=3$,
  \begin{equation*}
    \begin{split}
      \IP\big[\|\hpsi_1\|_\infty\ge k\big]
      &\le \se(n)+\prod_{j\in A_n}
      \Big(1-\frac{ck}{((j+1)n^a)^{d-2}\ln k}\Big)^{c_1 n^{a(d-2)}/\ln n}
      \\&\le \se(n)+
      \exp\bigg\{-\frac{c_3 k}{\ln n\ln k}
        \sum_{j\in A_n}\frac{1}{(j+1)^{d-2}}\bigg\}
      \\&\le e^{-c_4 k /\ln k} + \se (n).
    \end{split}
  \end{equation*}
  For $d\ge 4$ the calculation is very similar. Actually, it is
  sufficient to consider only the term $j=0$, as there are no
  logarithmic terms in the denominator.
\end{proof}

As a consequence of the last lemma and Proposition~\ref{p_connectedness}
we obtain,
\begin{align}
  \IP[\rho_u(\hpsi_1,\psi_1)\leq c'n^{2a}]
  &\geq 1-\se(n),
  \label{distance_psi0}\\
  \IP[\rho_u(\hpsi_0(n\e_1),
      \psi_0(n\e_1))\leq c'n^{2a}]
  &\geq 1-\se(n).
  \label{distance_psin}
\end{align}
Indeed, obviously
\begin{equation}
  \begin{split}
    \label{e:odhad}
    \IP&[\rho_u(\psi_1,\hpsi_1)\geq c'n^{2a}]
    \\&\le
    \mathbb P[\{\psi_1,\hpsi_1\}\not\subset B(n^a)]
    +
    \mathbb P\big[\{\psi_1,\hpsi_1\}\subset B(n^a), \rho_u(\psi_1,\hpsi_1)\ge
      c'n^{2a}\big].
  \end{split}
\end{equation}
The first term in the right-hand side is $\se(n)$ by
Lemma~\ref{l:dist_exp_tail}. For the second one, it suffices to set
$c'=3(2\beta (h,d)+1)$ and apply Proposition~\ref{p_connectedness}
to~$U_0$ (recall~\eqref{def_Uj}) and $\heta_1^{(n)}=\heta_2^{(n)}=\eta_{n,0}$
which satisfy the assumptions~\eqref{heta_1} and~\eqref{heta_2} due
to~\eqref{e:entracesaussage}. Claim~\eqref{distance_psi0} then follows.
The proof of~\eqref{distance_psin} is completely analogous.

Similarly, applying Proposition~\ref{p_connectedness} to
the sequence of sets~$U_k$, $k=0,\ldots,n$,
we obtain that
\begin{equation*}
  \IP[\hat\I \text{ is connected}]\geq 1-\se(n).
\end{equation*}

To bound the middle term on the right-hand side of \eqref{e:triangle}, we
consider the sequence of random variables
\begin{equation*}
  \hat T^n_k = \begin{cases}
    \hat \rho(k\e_1,\hpsi_1(k\e_1)), & \text{ if }
    k\e_1\in{\hat\I}, \|\hpsi_1(k\e_1)-k\e_1\|\le n^a\\
    0, & \text{ otherwise }.
  \end{cases}
\end{equation*}
It is clear that on the event
$\{\|\hpsi_1(k\e_1)-k\e_1\|\le n^a:k=0,\dots,n\}$, which by
Lemma~\ref{l:dist_exp_tail} has probability $1-\se(n)$, we have
\begin{equation}
  \label{sum_T}
  \hat \rho(\hpsi_1,\hpsi_0(n\e_1))
  \leq \sum_{k=0}^n \hat T^n_k .
\end{equation}
To control the sum, we need a tail estimate on $\hat T^n_k$ that is
uniform in $n$.
\begin{lemma} For every $k=0,\dots,n$, uniformly in $k$, we have
  \begin{equation*}
    \mathbb P[\hat T^n_k\ge \ell]\le \se(n)+\se(\ell).
  \end{equation*}
\end{lemma}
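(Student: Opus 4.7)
The plan is a two-scale analysis, splitting into the regimes $\ell\ge c_\star n^{2a}$ and $\ell<c_\star n^{2a}$ for a suitable constant $c_\star$. The $\se(n)$ term will come from the large-scale regime, and the $\se(\ell)$ term from a localised argument at scale $r=\Theta(\sqrt\ell)$.

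In the regime $\ell\ge c_\star n^{2a}$, I would essentially repeat the argument used to derive~\eqref{distance_psi0}. On the event $\hat T^n_k>0$ both endpoints lie in $B(k\e_1,n^a)\subset G_a^{(n)}$, and applying Proposition~\ref{p_connectedness} to the walks of $\mu^u_{G_a^{(n)}}$ whose first entry into $G_a^{(n)}$ lies in $B(k\e_1,n^a)$ (which by Lemma~\ref{l:escape_sausage} and Poisson concentration satisfy the thinning hypotheses~\eqref{heta_1}--\eqref{heta_2} for a suitable $h\in(0,2/d)$) produces a $\hat\I$-path of length $\le c_\star n^{2a}\le\ell$ with probability $1-\se(n)$.

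For $\ell<c_\star n^{2a}$, set $r=\lfloor\sqrt{\ell/c_\star}\,\rfloor$, so that $r\le n^a$ and $c_\star r^2\le\ell$, and decompose
\[
  \IP[\hat T^n_k\ge\ell]\le\IP\bigl[\|\hpsi_1(k\e_1)-k\e_1\|_\infty>r\bigr]
  +\IP\bigl[\hat T^n_k\ge\ell,\,\|\hpsi_1(k\e_1)-k\e_1\|_\infty\le r\bigr].
\]
The first summand is bounded by $g_d(r)+\se(n)=\se(\ell)+\se(n)$ via Lemma~\ref{l:dist_exp_tail}. On the event in the second summand both endpoints lie in $\hat\I\cap B(k\e_1,r)$, and the same argument as above, now carried out at the local scale $r$ using walks whose first $G_a^{(n)}$-entry lies in an $O(r)$-neighbourhood of $k\e_1$ in the $\e_1$-direction (inflated by a $\log n$-factor in $d=3$), yields a $\hat\I$-path of length $\le c_\star r^2\le\ell$ with probability $1-\se(r)=1-\se(\ell)$.

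The main technical obstacle is ensuring that the path produced by Proposition~\ref{p_connectedness} at the local scale actually lies in $\hat\I$ and not merely in $\I^u$. I would bypass this by selecting, at each scale, only walks whose first entry into $G_a^{(n)}$ already lies in the relevant local window, so that their $\mu^u_{G_a^{(n)}}$-parameterisation coincides with the local one; then their first $2r^2\ll n^{2(a+\varepsilon )}\le \hat t_k$ trajectory-steps lie automatically in $\hat\I$. Verifying that sufficiently many such walks exist to satisfy~\eqref{heta_1}--\eqref{heta_2} with some small $h>0$ is a direct computation from Lemma~\ref{l:escape_sausage}, which is comfortable for any $h>0$ in $d\ge 4$ and, after absorbing an additional logarithmic factor into the choice of entry-window, also in $d=3$.
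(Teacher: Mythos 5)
Your two-scale decomposition (regimes $\ell\ge c_\star n^{2a}$ and $\ell<c_\star n^{2a}$, with Lemma~\ref{l:dist_exp_tail} controlling the event $\|\hpsi_1(k\e_1)-k\e_1\|_\infty>r$ and Proposition~\ref{p_connectedness} at scale $r=\Theta(\sqrt\ell)$ supplying the short path) is exactly the paper's strategy, and your large-$\ell$ regime coincides with the paper's closing remark. The gap is in how you produce the walks to which Proposition~\ref{p_connectedness} is applied at the local scale. Selecting walks by the requirement that their first entry into $G_a^{(n)}$ lie in an $O(r)$-window around $k\e_1$ \emph{in the $\e_1$-direction} does not give walks started in $B(k\e_1,r)$: first entries occur on $\partial G_a^{(n)}$, hence at transverse distance of order $n^a\gg r$ from the axis, so these walks violate the standing hypothesis of Proposition~\ref{p_connectedness} that the $\heta^{(r)}_2$ walks start inside the ball of radius $r$. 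Nor can one repair this by keeping only those of your selected walks that actually visit $B(k\e_1,r)$: each does so with probability $\Theta((r/n^a)^{d-2})$, so of the $\Theta(rn^{a(d-3)})$ walks entering your window only $\Theta(r^{d-1}n^{-a})$ survive, which falls below the required $r^{d-2-h}$ as soon as $r\ll n^{a}$ (for $r=n^{a/2}$ one gets only $\Theta(r^{d-3})$). The walks that hit $B(k\e_1,r)$ predominantly enter $G_a^{(n)}$ at $\e_1$-distances spread over the whole range up to $\sim n^{a+\varepsilon/2}$, and all of them must be counted.

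This is what the paper does: it first proves \eqref{e:entrancesBm}, namely that at least $c_5m^{d-2}$ of the $\eta_n$ walks hit $B(m)$ \emph{before their cut-off times} $\hat t_i$, by summing over the windows $U_j$, $j\le n^{\varepsilon/2}$, the Poisson-thinned hitting counts obtained from Lemma~\ref{l_ball}; it then applies Proposition~\ref{p_connectedness} (with $h=0$) to these walks re-parameterised from their hitting points of $B(m)$, which do lie in $B(m)$. Your concern about $\hat\I$-membership is then the relevant one, but it is resolved not by localising the entry point (the initial $n^{2(a+\varepsilon)}$ steps of \emph{every} walk lie in $\hat\I$ anyway); it is resolved by the no-dangling-ends property of $\hat\I$: if $X^{(i)}_j\in G_a^{(n)}$ with $j\le\hat t_i$, then $\hat t_i\ge j+n^{2a}\ge j+2m^2$, so the $2m^2$ steps following the hit of $B(m)$ are automatically contained in $\hat\I$.
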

\begin{proof}
  Without loss of generality we consider $k=0$ only. First, we fix
  $m\le n^a/2$ and control the number of trajectories entering $B(m)$. We
  claim that
  \begin{equation}
    \label{e:entrancesBm}
    \mathbb P[|\{i\le \eta_{n}:H^i_{B(m)}\le \hat t_i\}\ge c_5 m^{d-2}]
    \ge 1-\se (m).
  \end{equation}
  To prove \eqref{e:entrancesBm} we use an argument similar to the proof
  of Lemma~\ref{l:dist_exp_tail}.
  We define
  $A_n=\{0\le k \le n^{\varepsilon/2}, k \text{ even}\}$.
  For every $j\in A_n$,
  and $x\in U_j$, by Lemma~\ref{l_ball},
  \begin{equation*}
    q_x(B(m);n^{2(a +\varepsilon) }) \ge
    \frac{cm^{d-2}}{((j+1)n^a)^{d-2}}.
  \end{equation*}
  Using \eqref{e:entracesaussage}, the number of walks starting in~$U_j$
  hitting $B(m)$ has a Poisson distribution with parameter at least
  \begin{equation*}
    \frac{cf_d(n)m^{d-2}}{((j+1)n^a)^{d-2}} \ge
    \begin{cases}
      \displaystyle
      \frac{c'm^{d-2}}{(j+1)^{d-2} \ln n}, \qquad &d=3,\\[3mm]
      \displaystyle
      \frac{c'm^{d-2}}{(j+1)^{d-2} },  &d\ge 4.
    \end{cases}
  \end{equation*}
  Using the stability of the Poisson distribution, this yields that the
  number of walks starting in $\bigcup_{j\in A_n}U_j$ hitting $B(m)$
  has a Poisson distribution with mean at least $c'' m^{d-2}$.
  Claim~\eqref{e:entrancesBm} then follows from the large deviation
  properties of the Poisson distribution again.

  We now apply Proposition~\ref{p_connectedness} with $m$ instead of $n$,
  $G_m=B(m)$ and $\heta_1^{(m)}$, $\heta_2^{(m)}$ being the number of
  walks entering $B(m)$. Assumptions \eqref{heta_1}, \eqref{heta_2} are
  satisfied by the previous discussion. The construction of $\hat \I$
  assures that the walks do not stop earlier than after making
  $2m^2\le 2n^{2a}$ steps. Therefore, by an argument similar to proof of
  \eqref{distance_psi0}, for $c'=2(2 \beta (h,d)+3)$
  \begin{equation*}
    \mathbb P[\hat T_0^n\ge c'm^2]\le \mathbb P[0\in \hat \I]\big(
      \mathbb P_0^u[\hpsi_1 \notin B(m)] +
      \mathbb P_0^u[\hat T_0^n\ge c'm^2,\hpsi_1\in B(m)]\big).
  \end{equation*}
  Both terms in the parentheses are $\se(m)$, the first one by
  Lemma~\ref{l:dist_exp_tail}, the second one by
  Proposition~\ref{p_connectedness}. Taking $m_\ell$ such that
  $c'm_\ell^2 = \ell$, the lemma follows for for $\ell < c'(n/2)^{2a}$.

  For the remaining $\ell$'s it suffices to observe that
  $\se(\ell)\le \se(n)$ and apply the same reasoning as before with
  $B(n^a)$ instead of $B(m)$.
\end{proof}

We can now control the sum \eqref{sum_T}. To this end we define
$\tilde \rho $, $\tilde \psi_i^{(x)}(y)$, $\tilde T_k^n$ in the same way
as $\hat \rho$, $\hat \psi_i^{(x)}(y)$, $\hat T_k^n$, using $\tilde \I$
instead of $\hat\I$. Due to \eqref{e:hatItildeI},
\begin{equation}
  \label{e:tildeThatT}
  \mathbb P[\text{there exists }
    0\le k \le n:\hat T_n^k \neq \tilde T_n^k] \le \se(n).
\end{equation}
Therefore, by the previous lemma, for every $k$,
\begin{equation*}
  \mathbb P[\tilde T^n_k\ge \ell]\le \se(n)+\se(\ell).
\end{equation*}
The random variable $\tilde T^n_k$ depends only on the random walks that
can enter the ball $B(k\e_1, n^a)$. As
$\diam(R_i(\tilde t_k))\le 2n^{2(\alpha +\varepsilon )}$ by definition,
setting $b_n=5n^{2(\alpha +\varepsilon )}$, this implies that for every
$j\in \{1,\dots, b_n\}$ the random variables
$(T_{kb_n +j}:k=1,\ldots, n/b_n)$ are independent.

Therefore, for large enough~$c$, using  the
observation above \eqref{sum_T}, and \eqref{e:tildeThatT},
\begin{equation}
  \begin{split}
    \label{e:middlebound}
    \IP[\hat \rho(\hpsi_1,\hpsi_0(n\e_1))\geq cn/2]
    &\leq  \IP\Big[\sum_{k=0}^n \hat T_k\geq cn/2\Big] + \se(n)\\
    &\leq  \IP\Big[\sum_{k=0}^n \tilde T_k\geq cn/2\Big] + \se(n)\\
    &\leq \IP\Big[\exists  j\in[1,b_n] :
      \sum_{k=0}^{n/b_n} T_{kb_n+j}\geq cn/(2b_n) \Big]
    \\
    &\leq 1-\se(n),
  \end{split}
\end{equation}
where for the last inequality we used the fact that $a<\frac{1}{3}$ and
then applied a large deviation bound for random variables without
exponential moments (e.g., Theorem~1.1 of~\cite{Nag79}).

Combining \eqref{e:middlebound}, \eqref{distance_psi0},
\eqref{distance_psin} with \eqref{e:triangle}, the inequality
\eqref{e:rho_onedir} follows.

To conclude the proof of Theorem~\ref{t_ap}, observe
that~\eqref{e:rho_onedir} implies that for large enough~$c$
\begin{equation*}
  \begin{split}
    \IP[\exists k\in[n/2&,n] \text{ such that }k\e_1\in\I^u \text{ and }
      \rho_u(\psi_1, k\e_1)\geq cn]
    \\
    &\leq \IP\Big[\bigcup_{j\in[n/2,n]}
      \{\rho_u(\psi_1, \psi_0(j\e_1))\geq cn\}\Big]\\
    & \leq\se(n),
  \end{split}
\end{equation*}
and then, since for any $k\in[0,n/2]$ such that $k\e_1\in\I^u$ one can
write
$\rho_u(\psi_1, k\e_1)\leq \rho_u(\psi_1, \psi_0(n\e_1))
+\rho_u(\psi_0(n\e_1), k\e_1)$,
we have
\begin{equation}
  \label{reach_line}
  \IP[\exists k\in[-n,n] \text{ such that }k\e_1\in\I^u \text{ and }
    \rho_u(\psi_1, k\e_1)\geq cn] \leq\se(n).
\end{equation}
Observe that, by symmetry, \eqref{reach_line} also holds
if one substitutes~$\e_1$ by any coordinate vector~$\e_j$,
$j=2,\ldots,d$.
The claim of the lemma then follows if one writes, on
$x=(x_1,\ldots,x_d)\in \I_u$ and $0\in \I^u$,
\begin{equation*}
  \begin{split}
    \rho_u(0,x)&\leq \rho_u(0,\psi_1^{(\e_1)}(0))
    +\rho_u(\psi_1^{(\e_1)(0)},\psi_0^{(\e_1)}(x_1\e_1))
    \\&
    +\rho_u(\psi_0^{(\e_1)}(x_1\e_1),\psi_1^{(\e_2)}x_1\e_1)
    +\rho_u(\psi_1^{(\e_2)}x_1\e_1,\psi_0^{(\e_2)}(x_1\e_1+x_2\e_2))
    \\&+ \dots
    \\&+
    \rho_u(\psi_0^{(\e_{d-1})}(x_1\e_1+\cdots+x_{d-1}\e_{d-1})
      ,\psi_1^{(\e_d)}(x_1\e_1+\cdots+x_{d-1}\e_{d-1}))
    \\&\qquad +\rho_u(\psi_1^{(\e_d)}(x_1\e_1+\cdots+x_{d-1}\e_{d-1}),\psi_0^{(\e_d)}(x)),
  \end{split}
\end{equation*}
and uses the same reasoning for the even terms in the right-hand side,
and bounds the odd terms using~\eqref{e:inbox} and
Proposition~\ref{p_connectedness}, using the same argument as e.g.\
in~\eqref{e:odhad}. This completes the proof of Theorem~\ref{t_ap}. \qed

\section{Proof of the shape theorem} %<<<1
\label{s_proof_shape}
In this section, to prove Theorem~\ref{t_shape}, we use more or less
standard argument based on the Subadditive Ergodic Theorem. For reader's
convenience, let us state this theorem here (we use the version
  of~\cite{Lig85}):
\begin{theorem}
  \label{t_subadd}
  Suppose that $\{Y(m,n)\}$ is a collection of
  positive random variables indexed by integers
  satisfying $ 0 \leq m< n $ such that
  \begin{itemize}
    \item[(i)] $ Y(0,n) \le Y(0,m) + Y(m,n)$
    for all $ 0 \le m < n $;
    \item[(ii)] The joint distribution of $ \{Y(m+1, m+k+1), k \ge 1\} $
    is the same as that of $ \{ Y(m, m+k), k \ge 1\} $ for each $ m
    \ge 0 $;
    \item[(iii)] For each $ k \ge 1 $ the sequence of random variables
    $ \{Y(nk, (n+1)k), n \ge 1\} $ is a stationary
    ergodic process;
    \item[(iv)] $ \IE Y(0,1) < \infty $.
  \end{itemize}
  Then, it holds that
  \[
    \lim_{n \to \infty} \frac{Y(0,n)}{n} = \inf_{n \ge 0}
    \frac{\IE Y(0,n)}{n} \qquad \text{a.s.}
  \]
\end{theorem}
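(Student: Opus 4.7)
Let $\gamma:=\inf_n \IE Y(0,n)/n$. I would prove the two a.s.\ inequalities $\limsup_n Y(0,n)/n\le \gamma$ and $\liminf_n Y(0,n)/n\ge \gamma$ separately. As preparation, combining (i) and (ii) gives $\IE Y(0,m+n)\le \IE Y(0,m)+\IE Y(0,n)$, so Fekete's subadditivity lemma yields $\lim_n \IE Y(0,n)/n=\gamma$; iterated subadditivity together with (iv) also ensures $\IE Y(0,n)\le n\,\IE Y(0,1)<\infty$ for every $n$.

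\textbf{Upper bound.} Fix $k\ge 1$ and iterate (i) to get $Y(0,nk)\le \sum_{j=0}^{n-1}Y(jk,(j+1)k)$. By (iii) the summands form a stationary ergodic sequence with common finite mean $\IE Y(0,k)$, so Birkhoff's ergodic theorem gives $n^{-1}\sum_{j=0}^{n-1}Y(jk,(j+1)k)\to \IE Y(0,k)$ a.s., and thus $\limsup_n Y(0,nk)/(nk)\le \IE Y(0,k)/k$ a.s. To extend from the subsequence $\{nk\}$ to all $n=qk+r$ with $0\le r<k$, one writes $Y(0,n)\le Y(0,qk)+\sum_{i=0}^{r-1}Y(qk+i,qk+i+1)$ via iterated (i); the remainder is $o(n)$ a.s.\ by applying Birkhoff to the stationary sequence $\{Y(j,j+1)\}_{j\ge 0}$, which has finite mean by (iv). Taking the infimum over $k$ then yields $\limsup_n Y(0,n)/n\le \gamma$ a.s.

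\textbf{Lower bound: $L$ is a.s.\ constant.} Set $L:=\liminf_n Y(0,n)/n$; by the upper bound $0\le L\le \gamma$ so $L$ is integrable. From (i), $Y(0,n+1)\le Y(0,1)+Y(1,n+1)$, so dividing by $n+1$ and taking $\liminf$ gives $L\le L\circ\tau$ a.s., where $\tau$ is the index-shift implicit in (ii), i.e.\ $Y(m,n)\circ\tau=Y(m+1,n+1)$. Since (ii) implies $L\circ\tau\stackrel{d}{=}L$ and both are integrable, the standard observation (if $X\le Y$ a.s., $X\stackrel{d}{=}Y$, both integrable, then $X=Y$ a.s.) gives $L=L\circ\tau$ a.s. Combined with the ergodicity from (iii) (used through the $\tau$-invariant $\sigma$-algebra, which for each $k$ is contained in the invariant $\sigma$-algebra of the ergodic block shift on $\{Y(jk,(j+1)k)\}_{j\ge 0}$), this forces $L\equiv c$ for some deterministic $c\ge 0$.

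\textbf{Main obstacle: $c\ge \gamma$.} This is the genuine crux, because a direct Fatou gives only $\IE L\le \liminf_n \IE Y(0,n)/n=\gamma$, which is the wrong direction. I would argue by contradiction: assume $c<\gamma$ and fix $\eps$ with $c+2\eps<\gamma$. By the $\liminf$ definition and (ii), for a.e.\ $\omega$ and all large $N$ one can select disjoint witness-intervals $[a_i,a_i+\ell_i]\subset [0,N]$ of total length $(1-o(1))N$ along which $Y(a_i,a_i+\ell_i)\le (c+\eps)\ell_i$ (the shifted version of (i) being available by (ii)). Concatenating these via subadditivity and bridging the gaps by $o(N)$ single-step increments $Y(m,m+1)$, one obtains $Y(0,N)\le (c+\eps)N+R_N$ where the residual $R_N$ has $\IE R_N=o(N)$ thanks to (iv) and Birkhoff applied to $\{Y(j,j+1)\}$. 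Taking expectations and dividing by $N$ yields $\gamma\le c+\eps<\gamma$, a contradiction. The technical heart is to make this tiling measurable and to control $\IE R_N$ in $L^{1}$ rather than merely almost surely; this uses (iv) together with the stationarity in (ii) and the ergodic convergence in (iii). With both bounds established, $\lim_n Y(0,n)/n=\gamma$ a.s., as claimed.
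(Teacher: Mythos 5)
First, a point of orientation: the paper does not prove this statement at all — it is quoted verbatim as Liggett's improvement of Kingman's subadditive ergodic theorem, cited from \cite{Lig85} and used as a black box. So your proposal has to stand on its own as a proof of Liggett's theorem, and it does not. Your upper-bound half is fine (it is the standard easy half: block decomposition, Birkhoff along $\{Y(nk,(n+1)k)\}_n$, and a harmless $o(n)$ remainder from the single-step terms). The gap is in the lower bound, and it sits exactly where Liggett's hypotheses are weaker than Kingman's. Hypothesis (ii) is only an equality in distribution of each row process $\{Y(m,m+k)\}_k$ with its shift; it does \emph{not} provide a measure-preserving transformation $\tau$ on the underlying space with $Y(m,n)\circ\tau=Y(m+1,n+1)$, nor joint stationarity of the whole array. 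Your constancy argument for $L=\liminf_n Y(0,n)/n$ leans on precisely such a $\tau$, and on the claim that the ``$\tau$-invariant $\sigma$-algebra'' is contained in the invariant $\sigma$-algebra of the block shift from (iii). Neither is available: $L$ is a function of the whole array, not of any single block sequence $\{Y(jk,(j+1)k)\}_j$, so the ergodicity in (iii) cannot be applied to it, and no containment of $\sigma$-algebras has been (or can straightforwardly be) established. What your first step really gives, correctly rephrased without $\tau$, is $L\le L_m$ a.s.\ and $L_m\stackrel{d}{=}L$ (where $L_m:=\liminf_n Y(m,m+n)/n$), hence $L=L_m$ a.s.; this makes $L$ a tail variable, but tail triviality does not follow from (iii), so constancy of $L$ remains unproven at this stage. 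In the known proofs constancy is a \emph{consequence} obtained at the very end (from $\limsup\le\gamma$ a.s.\ together with $\IE[\liminf]\ge\gamma$), not an input.

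This matters because your tiling argument needs every row's liminf to be $\le c+\eps$ in order to find witness intervals starting at arbitrary points $a_i$; without constancy you only know $L\le L_{a_i}$, which is the wrong direction. The workable version (Liggett's, in the spirit of Steele's proof of Kingman) runs the covering argument with the \emph{random} truncated variable $\underline{Y}\wedge M$ and witness intervals of bounded length, selected row by row via stopping-time-type definitions, and then uses (ii) only through the identity $\IE[(\underline{Y}_m\wedge M)]=\IE[(\underline{Y}\wedge M)]$ when taking expectations of the resulting bound on $Y(0,N)$, together with a Wald-type control of the randomly many bridging terms supplied by (iv). That bookkeeping — measurable selection, bounded tile lengths, small expected density of ``bad'' starting points, $L^1$ control of the residual — is the actual heart of the theorem, and your proposal explicitly defers it (``the technical heart is to make this tiling measurable and to control $\IE R_N$''). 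As written, then, the crux of the hard direction is both partly misargued (the constancy step) and partly only asserted, so the proposal does not constitute a proof; if you want a complete argument under exactly these hypotheses, follow Liggett's original proof or its presentation in Durrett, which avoids any appeal to a global shift.
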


We are going to verify the hypotheses of Theorem~\ref{t_subadd}
for the sequence of random variables
\begin{equation*}
  Y(m,n) = \rho_u(\psi^{(x)}_m,\psi^{(x)}_n),
\end{equation*}
under the measure~$\IP_0^u$. First, (i) is obvious since $\rho_u$
is a metric. Stationarity and ergodicity in (ii)--(iii) follow
from the corresponding properties of $\I^u$, see Theorem~2.1
of~\cite{Szn10}.
The property~(iv) then follows from the estimate
\begin{equation*}
  \IP_0^u[\rho_u(0,\psi^{(x)}_1)>n]
  \leq \se(n),
\end{equation*}
which can be proved by applying the same procedure as in \eqref{e:odhad},
using Lemma~\ref{l:dist_exp_tail} and Proposition~\ref{p_connectedness}.

Theorem~\ref{t_subadd} implies that for
any~$x\in\Z^d$ there exists a positive
number $\sigma_u'(x)$ such that
\begin{equation}
  \label{def_sigma'}
  \IP^u_0\Big[\lim_{n\to\infty}
    \frac{\rho_u(0,\psi^{(x)}_n)}{n}=\sigma_u'(x)\Big] = 1.
\end{equation}
Then, define for $x\neq 0$
\begin{equation}
  \label{def_sigma}
  \sigma_u(x) = \frac{\sigma_u'(x)}{\IE^u_0 \zeta^{(x)}_1},
\end{equation}
and $\sigma_u(0):=0$.
With~\eqref{def_sigma'} it is straightforward to obtain
(observe that, according to our notations,
  $\psi^{(x)}_0(nx)$ is either~$nx$ itself in the case~$nx\in\I^u$,
  or it is the `last site before $nx$' on the
  discrete ray $\{kx, k\geq 0\}$ if~$nx\notin\I^u$), using also
the usual Ergodic Theorem and \eqref{e:inbox}, that
\begin{equation}
  \label{lim_sigma}
  \lim_{n\to\infty} \frac{\rho_u(0,\psi^{(x)}_0(nx))}{n}
  =\sigma_u(x) \qquad \text{$\IP^u_0$-a.s.}
\end{equation}
It is also straightforward to obtain that for any integer~$m$
and $x\in\Z^d$, it holds that $\sigma_u(mx)=m\sigma_u(x)$; this permits
us to extend~$\sigma_u$ to~$\Q^d$ by $\sigma_u(x):=m^{-1}\sigma_u(mx)$,
where~$m$ is such that $mx\in\Z^d$.
Also, it is clear that $\sigma_u(x)\geq \|x\|_1$ for any $x\in\Q^d$.

Next, the goal is to prove that~$\sigma_u$ is a norm.
\begin{lemma}
  \label{l_norm_rationals}
  For all $x,y\in\Q^d$ we have
  \begin{equation}
    \label{norm_rationals}
    \sigma_u(x+y)\leq \sigma_u(x)+\sigma_u(y).
  \end{equation}
\end{lemma}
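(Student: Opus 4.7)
The plan is to reduce to integer vectors using the scaling relation $\sigma_u(mz)=m\sigma_u(z)$ on $\Q^d$, then build an explicit path from the origin to a point near $n(x+y)$ that first heads toward $nx$ and then turns in direction $y$, and finally invoke the translation invariance of $\IP$ together with \eqref{lim_sigma} along each of the two segments.

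More precisely, assume $x,y\in\Z^d\setminus\{0\}$. Set $A_n:=\psi_0^{(x)}(nx)$ and $B_n:=\psi_0^{(y)}(A_n+ny)$; both lie in $\I^u$ by construction. The triangle inequality for $\rho_u$ yields
\begin{equation*}
\rho_u(0,\psi_0^{(x+y)}(n(x+y)))\le \rho_u(0,A_n)+\rho_u(A_n,B_n)+\rho_u(B_n,\psi_0^{(x+y)}(n(x+y))).
\end{equation*}
Dividing by $n$, the left-hand side converges $\IP_0^u$-a.s.\ to $\sigma_u(x+y)$ by \eqref{lim_sigma}, while the first summand on the right converges a.s.\ to $\sigma_u(x)$ by the same formula. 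For the third summand, Lemma~\ref{l:dist_exp_tail} implies that $|A_n-nx|$, $|B_n-(A_n+ny)|$, and $|\psi_0^{(x+y)}(n(x+y))-n(x+y)|$ are all bounded by $(\log n)^c$ with probability $1-\se(n)$, so both $B_n$ and $\psi_0^{(x+y)}(n(x+y))$ lie in a box of side length $O((\log n)^c)$ around $n(x+y)$. Theorem~\ref{t_ap}, applied locally in that box via translation invariance of $\IP$, bounds their internal distance by $O((\log n)^c)=o(n)$ with very high probability.

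The middle term is the key step: one has to show $\limsup_n\rho_u(A_n,B_n)/n\le \sigma_u(y)$ almost surely. By translation invariance of $\I^u$ under $\IP$, for each fixed $a\in\Z^d$ the conditional law of $\rho_u(a,\psi_0^{(y)}(a+ny))$ given $\{a\in\I^u\}$ equals that of $\rho_u(0,\psi_0^{(y)}(ny))$ under $\IP_0^u$, and the latter divided by $n$ tends a.s.\ to $\sigma_u(y)$ by \eqref{lim_sigma}. Since $A_n$ is concentrated on at most polynomially many sites of the ray $\Z\cdot x$ lying within distance $O((\log n)^c)$ of $nx$, a union bound combined with the above convergence yields the desired estimate.

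The main obstacle is that conditioning on $\{A_n=a\}$ reveals more than $a\in\I^u$: it also forces $a+x,a+2x,\dots,nx\notin\I^u$, so the conditional law of $\I^u$ given $\{A_n=a\}$ differs from its law given only $\{a\in\I^u\}$. This extra information is localised on the $x$-ray emanating from $a$, which is (generically) transverse to the segment along which $\rho_u(A_n,B_n)$ is measured. One can overcome this either by exploiting the near-independence of the Poisson trajectories responsible for the status of the $x$-ray and for hitting sets in direction $y$, or by replacing $A_n$ with a nearby unconstrained point of $\I^u$ at an additional cost controlled by Theorem~\ref{t_ap}; either way, the conditioning does not affect the asymptotics, and combining the three bounds completes the proof of \eqref{norm_rationals}.
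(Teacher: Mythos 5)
Your decomposition (go from $0$ towards $nx$, then turn in direction $y$, then compare with the direct route to $n(x+y)$) is the same as the paper's, and your treatment of the first and third legs is fine. The gap is exactly where you locate it, in the middle leg, and neither of your two suggested repairs closes it. First, the conditioning problem is real and not merely technical: $A_n=\psi_0^{(x)}(nx)$ is a size-biased point of $\I^u$ on the ray, and the event $\{A_n=a\}$ forces a whole stretch of the ray between $a$ and $nx$ to avoid $\I^u$; the ``near-independence'' you invoke is delicate because a single Poisson trajectory can simultaneously determine the occupation of that stretch and the local connections used for the $y$-leg, and ``replacing $A_n$ by a nearby unconstrained point'' begs the question, since any point selected by a rule measurable in $\I^u$ again has a tilted conditional law. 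Second, and independently fatal, your union bound over the $O((\log n)^c)$ candidate values of $a$ cannot be closed: the convergence in \eqref{lim_sigma} comes from the Subadditive Ergodic Theorem and carries no rate, so all you know is $p_n:=\IP_0^u[\rho_u(0,\psi_0^{(y)}(ny))/n>\sigma_u(y)+\eps]\to 0$, and $(\log n)^c\,p_n$ need not tend to $0$. This kills both the a.s.\ version and the in-probability version of your estimate for the middle term.

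The paper's proof avoids both obstacles by choosing the intermediate point differently: it uses the \emph{rank-indexed} point $\psi^{(x)}_{b_x^{-1}n}$ with $b_x=\IE^u_0\zeta^{(x)}_1$, i.e.\ the $\lfloor b_x^{-1}n\rfloor$-th point of $\I^u$ along the ray, rather than the last point before $nx$. Under $\IP^u_0$ the configuration seen from the $k$-th point of $\I^u$ on the ray has exactly the law $\IP^u_0$ (Palm invariance under point shifts), so $\rho_u\big(\psi^{(x)}_{b_x^{-1}n},\psi^{(y)}_{b_y^{-1}n}(\psi^{(x)}_{b_x^{-1}n})\big)$ is \emph{equal in distribution} to $\rho_u(0,\psi^{(y)}_{b_y^{-1}n})$ --- no conditioning to control and no union bound over locations. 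The price is that one only gets convergence of this term in distribution, hence in probability (the limit being the constant $\sigma_u(y)$), so the paper takes all limits in \eqref{triangle} in probability rather than almost surely; since the limits are constants, this still yields the deterministic inequality \eqref{norm_rationals}. The ordinary Ergodic Theorem guarantees $\psi^{(x)}_{b_x^{-1}n}=nx+o(n)$, and Theorem~\ref{t_ap} is then used, as in your third leg, to show that the discrepancy between the concatenated endpoint and $\psi^{(x+y)}_{b_{x+y}^{-1}n}$ contributes $o(n)$ in probability. If you rewrite your argument with rank-indexed points and limits in probability, it becomes the paper's proof.
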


\begin{proof}
  Abbreviate $b_x=\IE^u_0\zeta^{(x)}_1$; from the Ergodic Theorem
  we obtain
  \begin{equation}
    \label{norm_psi}
    \lim_{n\to\infty}\frac{\|\psi^{(x)}_{b_x^{-1}n}-nx\|}{n} = 0
    \qquad \text{ $\IP^u_0$-a.s.}
  \end{equation}
  Since~$\rho_u$ is a metric, we have (see Figure~\ref{f_norm})
  \begin{figure}
    \centering
    \includegraphics{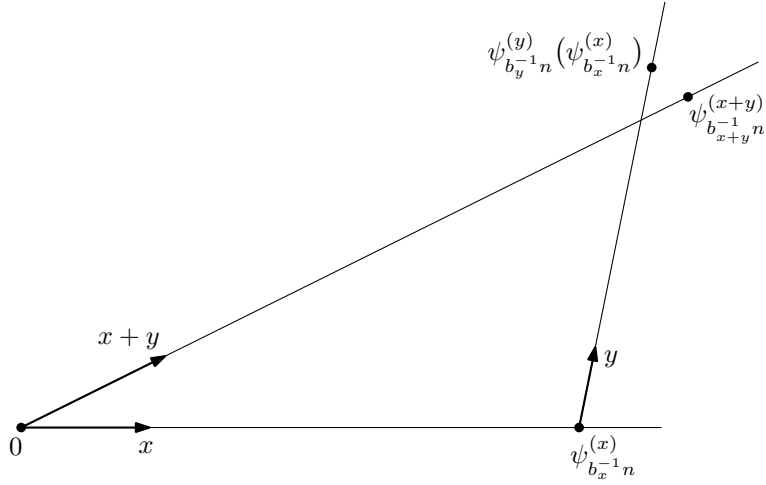}
    \caption{On the proof of Lemma~\ref{l_norm_rationals}}
    \label{f_norm}
  \end{figure}
  \begin{equation}
    \label{triangle}
    \frac{\rho_u\big(0,
        \psi^{(y)}_{b_y^{-1}n}(\psi^{(x)}_{b_x^{-1}n})\big)}{n}
    \leq \frac{\rho_u(0,\psi^{(x)}_{b_x^{-1}n})}{n}
    + \frac{\rho_u\big(\psi^{(x)}_{b_x^{-1}n},
        \psi^{(y)}_{b_y^{-1}n}(\psi^{(x)}_{b_x^{-1}n}) \big)}{n}
  \end{equation}
  for any~$n$.
  Then, the trick is to take the limit as $n\to\infty$
  in~\eqref{triangle} \emph{in probability}. First of all,
  a direct application of~\eqref{def_sigma'}--\eqref{def_sigma}
  shows that
  the first term in the right-hand side of~\eqref{triangle}
  converges to~$\sigma_u(x)$, even $\IP^u_0$-a.s. Next,
  under $\IP^u_0$ it holds that
  $\rho_u\big(\psi^{(x)}_{b_x^{-1}n},
    \psi^{(y)}_{b_y^{-1}n}(\psi^{(x)}_{b_x^{-1}n}) \big)$ is equal to
  $\rho_u\big(0,\psi^{(y)}_{b_y^{-1}n}\big)$ in distribution,
  so the second term in the right-hand side of~\eqref{triangle}
  converges to~$\sigma_u(y)$ in distribution and hence in probability.
  As for the term in the left-hand side of~\eqref{triangle},
  write
  \begin{equation}
    \label{oc_1st_term}
    \frac{\rho_u\big(0,
        \psi^{(y)}_{b_y^{-1}n}(\psi^{(x)}_{b_x^{-1}n})\big)}{n}
    \geq \frac{\rho_u\big(0,
        \psi^{(x+y)}_{b_{x+y}^{-1}n}\big)}{n} -
    \frac{\rho_u\big(\psi^{(y)}_{b_y^{-1}n}(\psi^{(x)}_{b_x^{-1}n}),
        \psi^{(x+y)}_{b_{x+y}^{-1}n}\big)}{n}.
  \end{equation}
  Again, the first term in the right-hand side of~\eqref{oc_1st_term}
  converges $\IP^u_0$-a.s.\ to~$\sigma_u(x+y)$. To obtain
  that the second term in the right-hand side of~\eqref{oc_1st_term}
  converges to~$0$ in probability, observe that
  \begin{align}
    \frac{\|\psi^{(y)}_{b_y^{-1}n}(\psi^{(x)}_{b_x^{-1}n})-
      \psi^{(x+y)}_{b_{x+y}^{-1}n}\|}{n}
    &\leq \frac{\|(x+y)n-
      \psi^{(x+y)}_{b_{x+y}^{-1}n}\|}{n} +
    \frac{\|nx-\psi^{(x)}_{b_{x}^{-1}n}\|}{n} \nonumber\\
    &\qquad + \frac{\|\psi^{(y)}_{b_y^{-1}n}
      (\psi^{(x)}_{b_x^{-1}n})-\psi^{(x)}_{b_{x}^{-1}n}-ny\|}{n}.
    \label{3norms}
  \end{align}
  Since the third term in the right-hand side of~\eqref{3norms} equals in
  distribution to $n^{-1}\|ny-\psi^{(y)}_{b_{y}^{-1}n}\|$,
  \eqref{norm_psi} implies that the left-hand side of~\eqref{3norms}
  converges to~$0$ in probability, and so Theorem~\ref{t_ap} implies that
  the second term in the right-hand side of~\eqref{oc_1st_term} converges
  to~$0$ in probability. This proves~\eqref{norm_rationals}.
\end{proof}

Now, Lemma~\ref{l_norm_rationals} shows that~$\sigma_u$ can be extended
to a norm in~$\R^d$ by continuity,
and we are able to finish the proof of Theorem~\ref{t_shape}.

\begin{proof}[Proof of Theorem~\ref{t_shape}]
  At this point the argument is quite standard.
  Let
  \[
    D_u = \{x \in \R^d : \sigma_u(x) \le 1 \}.
  \]

  Let $\eps'=(1-\eps)^{-1}-1$, and $\eps''=1-(1+\eps)^{-1}$. To prove
  Theorem~\ref{t_shape}, it is enough to prove that
  $ nD_u\cap\I^u \subset \Lambda^u((1+\eps')n)$ and
  $\Lambda^u((1-\eps'')n) \subset nD_u$ for all~$n$ large enough, $\IP^u_0$-a.s.

  Since~$D_u$ is compact, one can find a finite set
  $ F:= \{x_1, \ldots, x_k\} \subset D_u\cap \Q^d$ such that
  $ \sigma_u(x_i) < 1 $ for $i=1,\ldots,k$, and
  (with $C$ from Theorem~\ref{t_ap})
  \[
    D_u \cap\I^u \subset \bigcup_{i=1}^{k}B(x_i, C^{-1}\eps').
  \]
  Consider any $x_i\in F$; let~$m_i$ be the minimal positive
  integer such that $m_ix_i\in\Z^d$. Let~$n=jm_i+s$, where
  $0\leq s \leq m_i-1$. Then, for all~$n$ large enough it holds
  by~\eqref{lim_sigma} that
  $\psi^{(m_ix_i)}_0(jm_ix_i)\in \Lambda^u(n)$, $\IP^u_0$-a.s.

  Now, Theorem~\ref{t_ap}, \eqref{e:inbox}
  and the Borel-Cantelli lemma imply
  that $\IP^u_0$-a.s.\
  for all~$n$ large enough we have
  \[
    B( n x_i, C^{-1}n\eps')\cap\I^u
    \subset
    \Lambda^u (jm_ix_i,n\eps'),
  \]
  for all $i=1,2, \dots ,k$.
  So $ nD_u\cap\I^u  \subset \Lambda^u((1+\eps')n)$,
  which completes the first part of the proof.

  For the second, choose $G := \{y_1, \ldots, y_k\} \subset
  (2D_u \setminus D_u)\cap\Q^d$ in such a way that
  \[
    2D_u \setminus D_u
    \subset \bigcup_{i=1}^k B(y_i, \eps''\delta ).
  \]
  Notice that $ \sigma_u(y_i) > 1 $ for $i=1,\ldots,k$. Again,
  $ n G \cap \Lambda^u(n) =\emptyset$ for all~$n$ large enough $\IP^u_0$-a.s.
  Analogously, by Theorem~\ref{t_ap} and Borel-Cantelli we get that for
  all~$n$ large enough, if
  $\Lambda^u((1-\eps'')n) \cap n(2D_u \setminus D_u ) \neq \emptyset$, then
  $\Lambda^u(n) \cap n G \neq \emptyset$. This shows that
  $\Lambda^u((1-\eps'')n) \subset nD_u  $ for all~$n$ large enough,
  $\IP^u_0$-a.s., and so concludes the proof of Theorem~\ref{t_shape}.
\end{proof}

\section{Random walk on the torus} %<<<1
\label{s:torus}

It remains to show Theorem~\ref{t_torus}. We recall that $\mathbb T_N^d$
denotes the $d$-dimensional discrete torus of size $N$, $P^N$ the law of
the simple random walk on $\mathbb T_N^d$ started from the uniform
distribution, and $\rho^u_N(x,y)$ the internal distance within the set
$\I^u_N$ of
sites visited by the random walk before time $uN^d$,
$\I^u_N=\{X_0,\dots,X_{\lf uN^d\rf}\}$.

Let $B_N(x,r)\subset \mathbb T_N^d$ be the ball of
radius $r$ around $x$ in the usual distance, $d_N$, on the torus.
We first control the internal distance in balls of radius $\ln^\gamma N$.
\begin{lemma}
  \label{l:toruslocal}
  Let $x\in \mathbb T_N^d\subset \mathbb Z^d$.
  Then, for $c_1$, $\gamma $ large enough,
  \begin{equation}
    \label{e:toruslocal}
    P^N\big[\text{there exist }y,z\in B_N(x,\ln^\gamma N)
      \text{ such that }
      \rho_N^u(y,z)\ge c_1 \ln^\gamma N\big] = o(N^{-d}).
  \end{equation}
\end{lemma}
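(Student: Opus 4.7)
The plan is to couple the trace of the torus walk inside a ball of radius $\ln^\gamma N$ with the interlacement set $\I^{u'}$ on $\Z^d$ for some $u'$ close to $u$, and then invoke Theorem~\ref{t_ap} applied at scale $n=\ln^\gamma N$. Since there will later be a union bound over the $N^d$ possible centres $x$, we need the failure probability to beat $N^{-d}$, and this is what forces~$\gamma$ to be chosen large.

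First I would identify $B_N(x,2\ln^\gamma N)$ with a subset of $\Z^d$ (permissible since $\ln^\gamma N\ll N$) and decompose the trajectory $(X_k)_{k\le \lfloor uN^d\rfloor}$ into successive excursions between $\partial B_N(x,\ln^\gamma N)$ and $\partial B_N(x,2\ln^\gamma N)$. Using the standard coupling machinery between the torus walk and random interlacements (as in~\cite{Szn10} and its refinements for the torus, cf.\ the argument of Teixeira--Windisch), I would produce a collection of $\eta$ independent simple random walks on $\Z^d$, each started from $e_{B(\ln^\gamma N)}/\capa(B(\ln^\gamma N))$, whose traces inside $B_N(x,\ln^\gamma N)$ coincide with those of the torus excursions, where~$\eta$ is Poisson with parameter $u\capa(B(\ln^\gamma N))$. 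The error of this coupling must be estimated: (a) the number of excursions before time $uN^d$ must concentrate around its mean with deviation probability $o(N^{-d})$, which follows from a Bernstein-type bound once $\gamma$ is large enough; (b) the starting distribution of each excursion, conditional on the past, must be close in total variation to $e_B/\capa(B)$, which is controlled by standard mixing estimates on the torus at the diffusive scale, again producing an $N^{-d}$-small error for $\gamma$ large.

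Given this coupling, the set $\I^u_N\cap B_N(x,\ln^\gamma N)$ agrees, with probability $1-o(N^{-d})$, with $\I^{u}\cap B(\ln^\gamma N)$ for a random interlacement on~$\Z^d$ (up to translation and an arbitrarily small perturbation of~$u$ absorbed in the coupling error). I would then apply Theorem~\ref{t_ap}, centred at some reference point $x_0\in \I^u\cap B(\ln^\gamma N)$ (say $x_0=\psi_0^{(\e_1)}(0)$, which lies in the ball with probability $1-\se(\ln^\gamma N)$ by Lemma~\ref{l:dist_exp_tail}), with parameter $n=\ln^\gamma N$. This gives that every point of $\I^u\cap B(\ln^\gamma N)$ is at internal distance at most $C\ln^\gamma N$ from $x_0$, except on an event of probability at most $C'\exp(-(\ln N)^{\gamma\delta})$; the triangle inequality then bounds the pairwise internal distance by $2C\ln^\gamma N$, giving the desired $c_1$.

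For the probability estimate to beat $N^{-d}$, it suffices to take $\gamma$ so that $\gamma\delta>1$, where $\delta$ is the exponent from Theorem~\ref{t_ap}. The main obstacle is the coupling step: although versions of it are well known in the literature, one has to verify here that both the concentration of the excursion count and the mixing to equilibrium produce errors that are truly $o(N^{-d})$ rather than merely $o(1)$, and this is what dictates how large~$\gamma$ has to be. The application of Theorem~\ref{t_ap} is then essentially a black box, since the scale $\ln^\gamma N$ plays the role of the~$n$ appearing there and the stretched-exponential decay matches the required $o(N^{-d})$ bound exactly when $\gamma\delta>1$.
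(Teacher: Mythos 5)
Your overall strategy (excursion decomposition, coupling with walks started from the equilibrium measure, then invoking Theorem~\ref{t_ap} at scale $n=\ln^\gamma N$ with $\gamma\delta>1$) is the right skeleton and matches the paper's. But there is a genuine gap at the central step: you assert that, after the coupling, $\I^u_N\cap B_N(x,\ln^\gamma N)$ \emph{agrees} with $\I^{u}\cap B(\ln^\gamma N)$ for a genuine random interlacement ``up to an arbitrarily small perturbation of $u$ absorbed in the coupling error'', and you then apply Theorem~\ref{t_ap} as a black box. The available couplings (Teixeira--Windisch) do not give equality with an interlacement set at a single level; they give a two-sided sandwich $\I^{u(1-\eps)}\cap B(x,R)\subset \I^u_N\cap B_N(x,R)\subset \I^{u(1+\eps)}\cap B(x,R)$. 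The sandwich is enough to handle points of $\I^u_N$ that happen to lie in $\I^{u(1-\eps)}$, but the lemma quantifies over \emph{all} points of $\I^u_N$ in the ball, and a point in $\I^u_N\setminus\I^{u(1-\eps)}$ is not covered by Theorem~\ref{t_ap} applied to the lower level (it need not even belong to that interlacement set, so its internal distance to anything is not controlled). The excursion count is also not exactly Poisson and the starting laws are only approximately the normalised equilibrium measure, so the union of excursion traces is not literally an interlacement set either.

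The correct resolution is not to force exact agreement but to observe that the \emph{proof} of Theorem~\ref{t_ap} uses only two inputs about the family of trajectories: lower/upper bounds on the number of trajectories hitting each relevant sub-box, and the fact that their starting points are distributed (up to constants) like the normalised equilibrium measure. Both inputs are supplied by the excursion estimates (concentration of the excursion count around $u\capa B(x,R)$ and the lower bound $P^N[X^{(i)}_0=z]\ge (1-\eps)e_{B(x,R)}(z)/\capa B(x,R)$), so one re-runs the argument of Theorem~\ref{t_ap} directly for the excursion process rather than citing the theorem. There is also a boundary effect you do not address: the walk is stopped at time $\lf uN^d\rf$, which may fall inside the last excursion, so the union of completed excursions only sandwiches $\I^u_N\cap B(x,R)$ between $\bigcup_{i<\eta}\Ran X^{(i)}$ and $\bigcup_{i\le\eta}\Ran X^{(i)}$; one must separately argue that points on the final, possibly truncated excursion are within bounded internal distance of the cluster formed by the earlier ones.
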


Before proving this lemma, let us explain how it implies
Theorem~\ref{t_torus}.
\begin{proof}[Proof of Theorem~\ref{t_torus}]
  By the lemma and a simple union bound, with
  probability tending to $1$,  the event in \eqref{e:toruslocal} is
  satisfied for all $x\in \mathbb T_N^d$. If this is the case, we can chain
  these boxes to obtain the claim of the theorem. More precisely, consider
  $x,y$ such that $d_n(x,y)> \ln^\gamma N$. Then, one can find points
  $x=x_1,x_2, \ldots, x_n=y$ such that $x_{i+1}\in B(x_i, \ln^\gamma  N)$,
  $i<n$, and $\sum_{i=1}^{n-1} d_N(x_i,x_{i+1})\le 2 d_N(x,y)$. As we
  assume that the event in \eqref{e:toruslocal} is satisfied for all balls,
  for all $i<n$,
  \begin{equation*}
    \rho^u_N(x_i,x_{i+1}) \le c_1 d_N(x_i,x_{i+1}).
  \end{equation*}
  The theorem then follows using the triangular inequality, setting
  $\bar C= 2c_1$.
\end{proof}

\begin{proof}[Proof of Lemma~\ref{l:toruslocal}]
  Let $r=\ln^\gamma N$ and $R=C r$, with~$C$ of
  Theorem~\ref{t_ap}.
  By Theorem~1.1 of~\cite{TW11}, for any $\alpha>0$, there exists a
  coupling~$Q$ of random interlacement on~$\mathbb Z^d$ and random walk
  on the torus, such that
  \begin{equation*}
    Q\big[\big(\I^{u(1-\varepsilon )}\cap B(x,R)\big)
      \subset \big(\I_N^u\cap B_N(x,R)\big)\subset
      \big(\I^{u(1+\varepsilon )}\cap B(x,R)\big)\big]\ge 1- N^{-\alpha }.
  \end{equation*}
  For points that are in $\I^{u(1-\varepsilon )}\cap B(x,r)$ we can use
  Theorem~\ref{t_ap} and obtain the required statement.
  For points in $\I^u_N\setminus \I^{u(1-\varepsilon )}$, however, this
  simple argument fails and we need more details on the coupling
  construction.

  The construction starts by splitting the random walk trajectory into
  so-called excursions. These excursions are independent simple random
  walk trajectories started at the boundary of $B_N(x,R)$ and stopped
  when staying a sufficiently long time out of $B_N(x,N^{1-\varepsilon })$,
  see Section 4 of \cite{TW11} for the precise definition.

  We denote the excursions started before time $uN^d$ by
  $X^{(1)}, \ldots, X^{(\eta )}$, where~$\eta$ is random.
  These excursions are constructed in such a way that
  \begin{equation}
    \label{e:torusinclusion}
    \Big(\bigcup_{i=1}^{\eta -1} \Ran X^{(i)} \cap B(x,R)\Big)
    \subset \big(\I^u_N\cap B(x,R)\big)
    \subset
    \Big(\bigcup_{i=1}^{\eta } \Ran X^{(i)} \cap B(x,R)\Big).
  \end{equation}
  Using Lemma~4.3 of \cite{TW11}, it is easy to see
  that the random variable $\eta $ satisfies
  \begin{equation}
    \label{e:torusnumber}
    P^N[(u+\varepsilon ) \capa B(x,R)
      \ge \eta \ge (u-\varepsilon )\capa B(x,R)]
    \ge 1- e^{-c \capa B(x,R)}.
  \end{equation}
  Further, combining Lemmas~3.9,~3.10 of \cite{TW11}, it follows that the
  distribution of the starting points satisfies for every
  $z\in \partial B(x,R)$ and $i=2,\ldots,R$
  \begin{equation}
    \label{e:torusinitial}
    P^N[X^{(i)}_0=z]\ge
    \frac{(1-\varepsilon )e_{B(x,R)}(z) }{\capa B(x,R) },
  \end{equation}
  that is, it is close to the normalised equilibrium measure. The
  distribution of the first excursion cannot be controlled so precisely,
  as in principle it can start inside of $B(x,R)$, but it is not issue for
  us.

  The proof of Lemma~\ref{l:toruslocal} is then completely analogous to
  proof of Theorem~\ref{t_ap}. It suffices to observe that the only
  property of the random interlacement that we used in the proof of
  Theorem~\ref{t_ap} are the bounds on the number and starting
  distribution of trajectories entering a fixed set. These bounds follow
  from~\eqref{e:torusnumber}, \eqref{e:torusinitial}. Finally one should
  observe that $\se(R)= o( N^{-d})$ if~$\gamma $ is chosen large enough.

  There is a small issue with the fact that the last point
  of the trajectory, $X_{\lf uN^d\rf}$, might be contained in the last excursion,
  cf.~\eqref{e:torusinclusion}.
  To solve
  this issue, observe that our techniques apply to the both `clusters'
  $\mathcal C:=\bigcup_{i=1}^{\eta-1 }\Ran X^{(i)}$ and
  $\bar {\mathcal C}:=\bigcup_{i=1}^{\eta }\Ran X^{(i)}$. Hence, with
  probability $1-\se(R)$, the internal distances on these clusters within
  $B(x,r)$ are bounded by~$c_1 r$. If this is the case, then the trajectory of
  $X^{(\eta) }$ must intersect~$\mathcal C$ at least every $(2c_1+1)$-steps.
  For any $x\in B(x,r)\cap X^{(\eta )}\cap \I^u_N$ there is thus a path of
  length at most $(2c_1+1)$  lying inside of~$\I^u_N$
  which connects~$x$
  to~$\mathcal C$. Lemma~\ref{l:toruslocal} then follows by triangular
  inequality, by increasing $c_1$ to $2(2c_1+1)+c_1$.
\end{proof}
%>>>

\appendix %<<<1
\section{Domination by Bernoulli percolation}
We sketch here a simple argument proving Theorem~\ref{t_ap} in $d\ge 5$,
with~$\delta =1$. This argument is based on the domination of the
interlacement set~$\I^u$ in thick two-dimensional slabs by the standard
Bernoulli percolation. This domination seems to be folklore in the random
interlacement community, but to our knowledge it does not appear in any
previous publications.

Let~$K$ be a sufficiently large constant  and
$\varepsilon \in (0,1)$. Let $E_2$ be the set of nearest-neighbour edges
of $\mathbb Z^2$, and for every $e=(x,y)\in E_2$, let
\begin{equation*}
  G_e = \bigcup_{i=0}^K B(Kx+i(y-x),K^\varepsilon )\subset \Z^d,
\end{equation*}
where we standardly identify $x=(x_1,x_2)\in \Z^2$ with
$(x_1,x_2,0,\dots,0)\in \Z^d$. $G_e$ is a thin parallelepiped of length
$K+2K^\varepsilon $ and width $2K^\varepsilon $ along the scaled edge
$Ke$.

Let $W^\star_e\subset W^\star$ (recall Section~\ref{s_preliminaries} for
  the notation) be the set of all doubly-infinite trajectories modulo
time shift that hit $G_e$ but not
$\bigcup_{e':\dist(e,e')\ge 1} G_{e'}$. The fact that the
co-dimension of $\Z^2$ is larger than $3$, that is, the random walk is
transient in the direction perpendicular to $\Z^2$, can be used to show
that
\begin{equation}
  \label{e:onlylocal}
  \mathbb P\bigg[ \parbox{8.5cm}{$\omega \in \Omega $ contains a trajectory with
      label smaller than $u$ that intersects $G_e$ but is not in
      $W^\star_e$}\bigg]\xrightarrow{K\to\infty}1.
\end{equation}

 From Lemma~2 of~\cite{RS11}, it follows that the probability that there
is a connection along the long direction of $G_e$ within $\I^u$ can be
made arbitrarily large by increasing~$K$. Let~$\mathcal C_e$ be the
probability that this connection uses only the trajectories in
$W^\star_e$. Using \eqref{e:onlylocal}, it follows  that the probability
of~$\mathcal C_e$ can be made arbitrarily large by increasing~$K$ too.

Moreover, Proposition~1 of \cite{RS11} (or our
  Proposition~\ref{p_connectedness}) can be used to show that for any
$x\in \Z^2\subset \Z^d$ and $v,w\in \I^u\cap B(Kx,K^\varepsilon )$ there
is connection of~$v$ and~$w$ within $\I^u\cap B(Kx,2K^\varepsilon )$ with
probability tending to~1 as~$K$ increases. We denote by~$\mathcal D_x$ the
event that this connection uses only the trajectories in
$\bigcup_{e\ni x} W^\star_e$. Again,
applying~\eqref{e:onlylocal} and choosing~$K$ large, the
probability of~$\mathcal D_x$ can be made arbitrarily large.

Finally, call the edge $e=(x,y)\in E^2$ good, when
$\mathcal C_e\cap \mathcal D_x\cap \mathcal D_y$ occur. It follows that
the probability that $e$ is good can be made arbitrarily large by
choosing $K$ large.  Since $W^\star_e$ and $W^\star_e$ are disjoint
subsets of $W^\star$ when $\dist(e,e')\ge 1$, the events `$e$ is good'
and `$e'$ is good' are independent when $\dist (e,e')\ge 4$.  Moreover,
using the events $\mathcal D_x$, the connections realising $\mathcal C_e$,
$\mathcal C_{e'}$ in two adjacent good edges $e$, $e'$ can be connected
to form one path.

Using the domination argument of~\cite{LSS97},
we see that for~$K$ large
the good edges dominate the supercritical Bernoulli percolation on~$\Z^2$,
in particular, there is with probability one an infinite cluster
$\mathcal C \subset \Z^2$ of good edges. Moreover, when
$x,y\in \mathcal C$ are connected by a path of length $\ell$ in
$\mathcal C$, $B(xK,K^\varepsilon )$ and $B(yK,K^\varepsilon )$ are
connected by a path of length at most
$\ell \{(K+2K^\varepsilon ) (2K^\varepsilon+1 )^{d-1}+2(4K^\varepsilon+1 )^d\}$
within $\I^u$ (the factor in braces is simply the volume of the
  parallelepiped $G_e$ plus volume of the two boxes $B(x,2K^\varepsilon)$,
  $B(y,2K^\varepsilon )$).

Theorem~1.1~of~\cite{AP96} then implies that the claim of
Theorem~\ref{t_ap} holds (with $\delta =1$) for all
$x \in \I^u\cap \bigcup_{y\in \mathcal C} B(y,N^\varepsilon )$. The
extension to all $x\in \I^u$ is then trivial by repeating the argument
for other coordinate directions and using Proposition~1 of~\cite{RS11}
or Proposition~\ref{p_connectedness} to made the final connections to
those $x$'s that are not in the $K^\varepsilon $-neighbourhood
of~$K\mathcal C$.

\bibliographystyle{alpha}
\bibliography{i_set}

\end{document}